\documentclass[12pt,a4paper]{amsart}

\usepackage[utf8]{inputenc}
\usepackage[T1]{fontenc}
\usepackage[english]{babel}
\usepackage{cite}

\usepackage{indentfirst}
\usepackage{amssymb}
\usepackage{amsfonts}
\usepackage{amsmath}
\usepackage{amsthm}
\usepackage[top=2.5cm, bottom=2.5cm, left=2.5cm, right=2.5cm]{geometry}
\usepackage{amsopn}
\usepackage[colorlinks]{hyperref}
\usepackage{mathrsfs}
\usepackage{amsxtra}
\usepackage{color}
\usepackage{dsfont}

\newcommand{\al}{\alpha}

\newcommand{\si}{\sigma}
\newcommand{\om}{\omega}
\newcommand{\la}{\lambda}
\newcommand{\va}{\varphi}

\theoremstyle{plain}
\newtheorem{thm}{Theorem}

\newtheorem{lem}{Lemma}[section]
\newtheorem{prop}{Proposition}[section]
\newtheorem{cor}{Corollary}[section]

\theoremstyle{definition}

\newtheorem*{example*}{Example}

\newtheorem*{rem*}{Remark}
\newtheorem{rem}[thm]{Remark}

\newcommand{\ep}{\eta}

\newcommand{\ve}{\varepsilon}
\newcommand{\Om}{\Omega}
\newcommand{\Ga}{\Gamma}
\newcommand{\ze}{\zeta}
\newcommand{\ga}{\gamma}
\newcommand{\Sd}{\mathbb{S}^{d-1}}

\newcommand{\R}{\mathbb{R}}

\DeclareMathOperator{\supp}{supp}

\DeclareMathOperator{\B}{B}

\DeclareSymbolFont{bbsymbol}{U}{bbold}{m}{n}
\DeclareMathSymbol{\ind}{\mathbin}{bbsymbol}{'061}

\title[Logarithmic fractional Hardy inequalities]{Logarithmic fractional Hardy inequalities}

\author[M{.} Kijaczko and V{.} Sahu]{Micha\l{} Kijaczko and Vivek Sahu}

\keywords{fractional Sobolev space, fractional Hardy inequality, Gagliardo seminorm, weight, sharp constant, critical case}
\subjclass[2020]{Primary 46E35; Secondary 39B72, 26D15}

\address[ M.K.]{Faculty of Pure and Applied Mathematics\\ Wroc{\l}aw University 
	of Science and Technology\\
	Wybrze\.ze Wyspia\'nskiego 27,
	50-370 Wroc{\l}aw, Poland
}
\email{michal.kijaczko@pwr.edu.pl}

\address[ V.S.]{Theoretical Statistics and Mathematics Unit,
Indian Statistical Institute, Delhi Centre, S.J. Sansanwal Marg, New Delhi, Delhi 110016, India}
\email{vivek@isid.ac.in,  viiveksahu@gmail.com}

\begin{document}
\begin{abstract}
We establish sharp fractional Hardy inequalities in the critical regimes $sp=d$ and $sp=1$, with explicit logarithmic weight functions. In the whole-space setting, for $p>d\ge 1$, $sp=d$, we prove that
\begin{equation*}
 \int_{\mathbb R^d}  \int_{\mathbb R^d}
\frac{|u(x)-u(y)|^p}{|x-y|^{2d}}\,dx\,dy
\geq
\left(\frac{p-1}{p}\right)^p
\int_{B_R}|u(x)|^p W_{p,d,R}(x)\,dx,   
\end{equation*}
for $u\in C_c(B_R)$, where $W_{p,d,R}$ is a logarithmic-type weight and the constant is sharp. We also show that the corresponding inequality with the simpler weight $|x|^{-d}(\log(R/|x|))^{-p}$ cannot hold with any positive constant. Furthermore, we derive two-sided estimates and asymptotic formulae for $W_{p,d,R}$, including its limiting connection with the classical Hardy inequality. In the half-space $\mathbb{R}^{d}_{+}$, we establish an analogous sharp fractional Hardy inequality in the critical case $sp=1$, with an explicit logarithmic weight depending on the distance to the boundary. We also investigate the asymptotic behaviour of this weight, obtaining estimates that describe its behaviour near the boundary. These results elucidate the role of logarithmic weights in critical fractional Hardy inequalities in both whole-space and half-space settings.
\end{abstract}

\maketitle

\section{Introduction}

Functional inequalities provide a framework for establishing fundamental estimates involving the norms and energies of functions. Among them, Hardy inequalities are characterized by the presence of singular weights and their relation to the underlying energy of a function. Such inequalities have been studied extensively in both local and nonlocal settings. Their study becomes particularly delicate in the critical case, where the classical inequality may fail to provide a nontrivial estimate and suitable modifications of the weight are required.

\subsection{Classical Hardy inequalities} Let $d\geq 2$ and $1\le p<d$. The classical Hardy inequality states that
\begin{equation}\label{Classical Hardy inequality}
\int_{\mathbb{R}^d}|\nabla u(x)|^p\,dx \geq \left(\frac{d-p}{p}\right)^p
\int_{\mathbb{R}^d}\frac{|u(x)|^p}{|x|^p}\,dx,
\end{equation}
for every $u\in C_c^1(\mathbb{R}^d)$. The constant $\left(\frac{d-p}{p}\right)^p$ is optimal. This inequality gives control over the weighted $L^p$-norm of a function by its gradient.

The situation changes when $p=d$. In this case, the constant in the classical Hardy inequality becomes zero, and hence the inequality does not provide any useful estimate. Moreover, the weight $|x|^{-d}$ is singular near the origin and is not locally integrable there. Therefore, a logarithmic correction is required to obtain a Hardy inequality in the critical case.

The inequality analogous to \eqref{Classical Hardy inequality} in the case $p=d=2$ was studied by Leray in \cite{leray1933} and was subsequently
extended to the case $p=d\geq 2$ in \cite{adi2002, tertikas2003}. In particular, by  \cite[Theorem $1.1$]{adi2002}, the following logarithmic Hardy inequality holds. Let $\Omega\subset\mathbb{R}^d$, where $d\geq 2$, be a bounded domain containing the origin. If $R>\sup_{x\in\Omega}\left(|x|e^{\frac{2}{d}}\right)$, then there exists a constant $C=C(d,\Omega,R)>0$ such that, for every $u\in W_0^{1,d}(\Omega)$,
\begin{equation*}
    \int_{\Omega}|\nabla u(x)|^d\,dx
    \geq C\int_{\Omega}
    \frac{|u(x)|^d}{|x|^d \left(\log\frac{R}{|x|}\right)^{\gamma}}
    \,dx,
\end{equation*}
if and only if $\gamma\geq d$.


Although this inequality provides a meaningful estimate in the critical case, it does not possess the same scale-invariance property as the classical Hardy inequality. To overcome this difficulty, Ioku and Ishiwata \cite{Ioku2015} established a scale-invariant form of
the critical Hardy inequality. Their main result states that
\begin{equation}\label{Critical Hardy inequality}
\int_{B_R}
\left|\frac{x}{|x|}\cdot\nabla u(x)\right|^d\,dx \geq \left(\frac{d-1}{d}\right)^d
\int_{B_R}
\frac{|u(x)|^d}
{|x|^d\left(\log\frac{R}{|x|}\right)^d}\,dx,
\end{equation}
for every $u\in W_0^{1,d}(B_R)$. Here, $B_R=\{x\in\mathbb{R}^d:|x|<R\}$ denotes the open ball in $\mathbb{R}^d$. The constant $\left(\frac{d-1}{d}\right)^d$ is optimal. Unlike the preceding inequality, the left-hand side involves only the radial component of the gradient. Moreover, this inequality is invariant under the scaling
\begin{equation*}
u_\lambda(x):=
\lambda^{-\frac{d-1}{d}}
u\left(R^{1-\la}|x|^{\lambda-1}x\right),
\quad \text{for } \lambda>0.
\end{equation*}

For further results on critical Hardy inequalities and logarithmic Hardy inequalities, we refer to some of the works in \cite{MR4168185, MR1724289, MR3101927, MR2671121}. For the special case $p=d=1$, we refer to \cite[Theorem~1]{Adi2025}, where the weight functions are expressed in terms of infinite logarithmic series; see \cite[Section~3]{Adi2025} for the one-dimensional proof.

\subsection{Fractional Hardy inequalities} A natural extension of the classical Hardy inequality is the fractional Hardy inequality. Frank and Seiringer \cite{MR2469027} established a sharp fractional Hardy inequality using a nonlinear ground state representation. They proved that, for $0<s<1$ and $1 \leq p < \frac{d}{s}$, the inequality holds for all $u  \in C_c(\mathbb{R}^d)$, while for $p > \frac{d}{s}$ it remains valid for $u \in C_c\left(\mathbb{R}^d \setminus \{0\}\right)$, and in both cases one has
\begin{equation}\label{Fractional Hardy}
\int_{\mathbb{R}^{d}}\int_{\mathbb{R}^{d}} \frac{|u(x)-u(y)|^{p}}{|x-y|^{d+sp}} \, dx \, dy \geq \mathcal{C}_{d,s,p} \int_{\mathbb{R}^{d}} \frac{|u(x)|^{p}}{|x|^{sp}} \, dx,
\end{equation}
where the constant $\mathcal{C}_{d,s,p}>0$ is optimal and admits the explicit representation
\begin{equation*}
\mathcal{C}_{d,s,p} := 2 \int_{0}^{1} r^{sp-1} \bigl|1-r^{(d-sp)/p}\bigr|^{p} \Phi_{d,s,p}(r) \, dr,
\end{equation*}
with
\begin{align*}
\Phi_{d,s,p}(r) &:= |\mathbb{S}^{d-2}| \int_{-1}^{1} \frac{(1-t^{2})^{(d-3)/2}}{(1-2rt+r^{2})^{(d+sp)/2}} \, dt, \quad d \geq 2, \\
\Phi_{1,s,p}(r) &:= \frac{1}{(1-r)^{1+sp}} + \frac{1}{(1+r)^{1+sp}}, \quad d=1.
\end{align*}
Since then, several generalisations and extensions of the above fractional Hardy inequalities in the non-critical case have been established. We refer the reader to \cite{MR4708667, MR4705882, VivekCCM2026} for some relevant results.

\smallskip

In the critical case $sp=d$, the weight $|x|^{-d}$ is singular both near the origin and at infinity. Consequently, as in the classical Hardy inequality in the critical case $p=d$, the above fractional Hardy inequality cannot be expected to hold with a positive constant, for all admissible functions. Therefore, an appropriate modification of the weight function is necessary to establish a suitable fractional Hardy inequality.

Nguyen and Squassina \cite[Theorem $3.1$]{MR3771839} considered the critical case $sp=d$ and established fractional Hardy inequalities with logarithmic improvements. In particular, they proved that, for $p>d\geq 1$ and $u\in C_c(\mathbb{R}^d)$ supported in $B_R$, the following inequality holds,
\begin{equation}\label{Critical fractional Hardy}
\int_{\mathbb{R}^d}\int_{\mathbb{R}^d}
\frac{|u(x)-u(y)|^p}{|x-y|^{2d}}
\,dx\,dy
 \geq C 
\int_{B_R}
\frac{|u(x)|^p}{|x|^d\left(\log\frac{2R}{|x|}\right)^p}
\,dx,
\end{equation}
where $C>0$ is a constant independent of $u$ and $R$. However, the optimality of the constant $C$ in \eqref{Critical fractional Hardy} remains an open question.

In fact, Nguyen and Squassina  \cite{MR3771839} established a full range of fractional Caffarelli--Kohn--Nirenberg inequalities in the critical case. By choosing appropriate parameters in their general inequality, we recover the above fractional Hardy inequality with a logarithmic improvement in the critical case $sp=d$.

\smallskip

The above fractional Hardy inequality in the critical case $sp=d$ raise the following questions:
\begin{itemize}
    \item[(1)]  \textit{Can the logarithmic weight functions in the above inequalities be improved? More generally, can one identify other weight functions that yield analogous fractional Hardy inequalities in the critical case $sp=d$ and recover the fractional Hardy inequality of the type \eqref{Critical fractional Hardy}?}

    \item[(2)] \textit{What is the optimal constant in such fractional Hardy inequalities in the critical case $sp=d$?}
\end{itemize}

In this article, we address these questions and establish fractional Hardy inequalities in the critical case with improved weight functions. Furthermore, we determine the sharp constant in the resulting inequalities, thereby providing an optimal version of the fractional Hardy inequality in the critical case $sp=d$. Moreover, the resulting inequality recovers the fractional Hardy inequality \eqref{Critical fractional Hardy} with an explicit constant. Below and elsewhere, $\om_{d-1}=\frac{2\pi^{d/2}}{\Ga\left(d/2\right)}$ denotes the surface measure of the unit sphere $\Sd\subset\R^d$. 

The following theorem presents the main result of this article.

\begin{thm}\label{Thm: logarithmic frac Hardy inequality}
Let $p>d\geq 1$, and let
$u\in C_c(B_R)$, where $B_{R}$ is a ball of radius $R>0$ in $\mathbb{R}^{d}$. Define
\begin{equation}\label{WpdR}
W_{p,d,R}(x):=\frac{2\om_{d-1}}{|x|^d}\int_{0}^{1}\frac{(1+t^2)t^{d-1}}{(1-t^2)^{d+1}}\frac{\left(\log\frac{1}{t}\right)^p}{\left(\log\frac{R}{t|x|}\right)^p}\,dt,\quad \text{for } x\in B_R\setminus\{0\}.
\end{equation}
Then the following fractional Hardy inequality holds,
\begin{equation}\label{Hardy with weight W}
\int_{\R^d}\int_{\R^d}\frac{|u(x)-u(y)|^p}{|x-y|^{2d}}\,dy\,dx\ge\left(\frac{p-1}{p}\right)^p\int_{B_R}|u(x)|^p W_{p,d,R}(x)\,dx.
\end{equation}
Moreover, the constant $((p-1)/p)^p$ in \eqref{Hardy with weight W} is sharp.
\end{thm}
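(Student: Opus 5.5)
The plan is to combine a nonlinear ground state representation in the spirit of Frank--Seiringer \cite{MR2469027} with the logarithmic ground state
\begin{equation*}
w(x):=\left(\log\tfrac{R}{|x|}\right)^{\frac{p-1}{p}},\qquad 0<|x|<R .
\end{equation*}
This is the fractional analogue of the extremal profile in \eqref{Critical Hardy inequality}. The guiding observation is that, for $|y|=t|x|$ with $0<t<1$, setting $a=\log\frac{R}{|x|}$ and $b=\log\frac{R}{|y|}$ gives
\begin{equation*}
\frac{\log\frac1t}{\log\frac{R}{t|x|}}=1-\frac{a}{b}.
\end{equation*}
So the weight \eqref{WpdR} is an average of $\bigl(1-\frac{a}{b}\bigr)^p$ against the kernel. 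This suggests the following reduction. Using the symmetry of the double integral, I restrict to $\{|y|<|x|\}$, which produces the factor $2$. I then write $y=t|x|\sigma$ with $\sigma\in\Sd$, and use the spherical identity
\begin{equation*}
\int_{\Sd}\frac{d\sigma}{|e-t\sigma|^{2d}}=\om_{d-1}\frac{1+t^2}{(1-t^2)^{d+1}} .
\end{equation*}
This identity has to be checked separately (for $d=1$ it is the elementary identity $(1-t)^{-2}+(1+t)^{-2}=2(1+t^2)(1-t^2)^{-2}$). With it, $W_{p,d,R}$ becomes exactly $\int_{|y|<|x|}\bigl(1-\frac{a}{b}\bigr)^p|x-y|^{-2d}\,dy$, up to the factor $2$.

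For the inequality itself, I write $u=w\,v$ on $B_R$, with $u\equiv 0$ outside. For each pair with $|y|<|x|<R$ (so $w(y)>w(x)$) I aim at a pointwise lower bound: the contribution of $|u(x)-u(y)|^p$ should dominate $\bigl(\frac{p-1}{p}\bigr)^p\bigl(1-\frac{a}{b}\bigr)^p|u(x)|^p$, plus terms that are symmetric or nonnegative and cancel or can be discarded after integration. Pairs with $y\notin B_R$ contribute $|u(x)|^p$ directly, and there I use $\frac ab\ge 0$ on the relevant set. The natural tool is the Frank--Seiringer elementary inequality
\begin{equation*}
|a-tb|^p\ge(1-t)^{p-1}\bigl(|a|^p-t|b|^p\bigr),
\end{equation*}
applied with $t=w(x)/w(y)=s^{(p-1)/p}$, where $s=a/b\in(0,1)$. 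This reduces everything to a one-variable inequality in $s$. Roughly, one needs
\begin{equation*}
\bigl(1-s^{\frac{p-1}{p}}\bigr)^{p-1}-\text{(the symmetrized counterterm)}\ \ge\ \Bigl(\frac{p-1}{p}\Bigr)^p(1-s)^p,
\end{equation*}
which is the discrete analogue of $\bigl((\log r)^{(p-1)/p}\bigr)'$. I expect this calculus step to be the main obstacle. One has to organize the symmetrization so that the leftover term has a sign, and then prove the scalar inequality, presumably via convexity of $s\mapsto s^{(p-1)/p}$ and Bernoulli-type estimates.

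For sharpness, I use radial test functions of the form
\begin{equation*}
u_\varepsilon(x)=\left(\log\tfrac{R}{|x|}\right)^{\frac{p-1}{p}-\varepsilon},
\end{equation*}
cut off near $0$ and near $\partial B_R$ (e.g. truncated at levels $\log\frac{R}{|x|}\in[\delta,1/\delta]$ and made continuous). I then show that the ratio of the two sides of \eqref{Hardy with weight W} tends to $\bigl(\frac{p-1}{p}\bigr)^p$ as $\varepsilon\to0$ and the cutoffs are removed. Both sides blow up like a negative power of $\varepsilon$ because of the logarithmic divergence of $\int |x|^{-d}(\log\frac R{|x|})^{-1-p\varepsilon}\,dx$ at the origin. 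The cutoff errors stay bounded, and the leading terms match because near $0$ the kernel integral localizes at scale $t\approx1$, where $1-\frac ab\approx\frac{\log(1/t)}{b}$. In this regime the computation reduces to the one-dimensional sharp Hardy constant $\bigl(\frac{p-1}{p}\bigr)^p$ in the variable $\log\log\frac R{|x|}$.
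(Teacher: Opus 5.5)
Your identity $W_{p,d,R}(x)=2\int_{|y|<|x|}\bigl(1-\frac ab\bigr)^p|x-y|^{-2d}\,dy$ is correct. The gap is the step you flag as the main obstacle, and it is not merely hard: it is false, so the ground-state route with this single $w$ cannot give the theorem. Take $t=w(x)/w(y)$ in the Frank--Seiringer inequality, with $|y|<|x|$. Then the positive term falls on the inner point $y$ and the negative term on the outer point $x$. After symmetrization (with $w:=0$ off $B_R$) the scheme yields $[u]^p\ge\int V_w|u|^p$, where
\begin{equation*}
V_w(x)=2w(x)^{1-p}\,\mathrm{p.v.}\!\int_{\R^d}|w(x)-w(y)|^{p-2}\bigl(w(x)-w(y)\bigr)\frac{dy}{|x-y|^{2d}} .
\end{equation*}
So it can only succeed if $V_w\ge\ga^pW_{p,d,R}$ on $B_R$, with $\ga=\frac{p-1}{p}$. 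Set $a=\log\frac R{|x|}$ and $\sigma=a^{-1}\log\frac{|x|}{|y|}$. The symmetrized integrand is then
\begin{equation*}
F(\sigma)=\bigl(1-(1-\sigma)_+^{\ga}\bigr)^{p-1}-\bigl((1+\sigma)^{\ga}-1\bigr)^{p-1}.
\end{equation*}
This is negative for large $\sigma$, whereas your right-hand side $\ga^p\bigl(\frac{\sigma}{1+\sigma}\bigr)^p$ is positive.

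The integrated comparison also fails, near $\partial B_R$. As $a\to0^+$, pairs with $|y|<R/2$ contribute at most $-c\,a^{-\ga(p-1)}$ to $V_w(x)$, since there $w(y)\ge(\log2)^{\ga}$ while $w(x)=a^{\ga}$. Everything else is bounded above by $Ca^{-d}$ by scaling. Hence if $(p-1)^2>dp$ (e.g. $d=1$, $p\ge3$), $V_w(x)\to-\infty$, while by the paper's boundary asymptotics $W_{p,d,R}(x)\to+\infty$ like $(\log\frac R{|x|})^{-d}$. The failure is not confined to large $p$. For $d=1$, $p=2$ one checks $\int_0^\infty\sigma^{-2}F(\sigma)\,d\sigma=0$: at small scales $w$ is the $\frac12$-harmonic profile $a_+^{1/2}$ in the variable $a$. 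So $V_w=o(a^{-1})$, while $\frac14W_{2,1,R}\asymp a^{-1}$.

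The paper does not use one global comparison function. After the radial reduction (Lemma \ref{radial lemma}) and Lemma \ref{lem: critical Gagliardo seminorm}, dilation invariance factorizes the energy as
\begin{equation*}
2\om_{d-1}^2\int_0^1\frac{t^{d-1}(1+t^2)}{(1-t^2)^{d+1}}\,dt\int_0^\infty|u(r)-u(rt)|^p\frac{dr}{r}.
\end{equation*}
Each fibre $t$ is then treated separately. With $r=Re^{-\tau}$ and $\alpha=\log\frac1t$, the inner integral becomes $\int_0^\infty|h(\tau+\alpha)-h(\tau)|^p\,d\tau$. This splits into the lattices $\tau_0+\alpha\mathbb{N}$. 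The discrete Hardy inequality $\sum_{k\ge1}|u_k-u_{k-1}|^p\ge\ga^p\sum_{k\ge1}k^{-p}|u_k|^p$ then applies, with $u_0=0$ coming from the support condition after $R\mapsto R/t$ (Lemma \ref{key lemma}, Corollary \ref{corr}). It produces exactly $(\log\frac1t)^p/(\log\frac{R}{t|x|})^p$. Heuristically, the comparison sequence on each fibre is $\approx k^{\ga}$, normalized at the first lattice point inside $B_R$. This fibre-by-fibre normalization is what removes the long-range negative terms that ruin $V_w$ near the boundary.

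Your sharpness plan parallels the paper's $u_N=(\min\{\log\frac Rr,N\}^{\ga}-1)_+$, but it lacks an energy upper bound. The paper uses
\begin{equation*}
\int_0^\infty|u(r)-u(rt)|^p\frac{dr}{r}\le\left(\log\tfrac1t\right)^p\int_0^\infty|u'(r)|^pr^{p-1}\,dr .
\end{equation*}
This is matched against $W_{p,d,R}\sim2\om_{d-1}C_{p,d}|x|^{-d}(\log\frac R{|x|})^{-p}$ at the origin. The mechanism there is not localization at $t\approx1$.
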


Somewhat unexpectedly, the sharp constant in \eqref{Hardy with weight W} has the same form as the sharp constant in local Hardy inequalities. Although the weight $W_{p,d,R}$ is defined in a quite complicated way, we are able to describe its asymptotics near the origin and the boundary of $B_R$. More precisely, we show in Section \ref{Section 2}, that
\begin{align*}
 W_{p,d,R}(x)&\sim \frac{2\om_{d-1}C_{p,d}}{|x|^d\left(\log\frac{R}{|x|}\right)^p},\text{ as }|x|\rightarrow 0^+,\\
W_{p,d,R}(x)&\sim\frac{2^{-d+1}\om_{d-1}\B(d,p-d)}{R^d\left(\log\frac{R}{|x|}\right)^d},\quad\text{ as }|x|\rightarrow R^{-},
\end{align*}
where $C_{p,d}$ is an explicit constant given by \eqref{Cpd} and $\B$ is the Euler Beta function. In particular, the Hardy weight $W_{p,d,R}$ is singular both near the origin and the boundary of $B_R$, in contrast to the logarithmic weight appearing in \eqref{Critical fractional Hardy}.

 To obtain a more explicit lower bound for this weight, we introduce an additional parameter $a>1$. Using the definition of $W_{p,d,R}$ in \eqref{WpdR}, we observe that, for $\frac{1}{a}<t<1$,
\begin{equation*}
    \left(\log\frac{R}{t|x|}\right)^{-p}
    \geq
    \left(\log\frac{Ra}{|x|}\right)^{-p},
\end{equation*}
while for $0<t<\frac{1}{a}$, using $\log\frac{Ra}{|x|}>\log a$ for $0<|x|<R$,
$$
\log\frac{R}{t|x|}=\log\frac{Ra}{|x|}+\log\frac{1}{at}\le\log\frac{Ra}{|x|}\left(1+\frac{\log\frac{1}{at}}{\log a}\right)=\left( \log\frac{Ra}{|x|} \right) \frac{\log\frac{1}{t}}{\log a}
$$
Restricting the integral defining $W_{p,d,R}$ to the intervals $(0,1/a)$ and $(1/a,1)$, we obtain the following proposition.

\begin{prop}
Let $a>1$. Then,
\begin{equation}\label{estimates for W}
W_{p,d,R}(x)\geq
\frac{C_{d,p,a}}
{|x|^d\left(\log\frac{Ra}{|x|}\right)^p},
\end{equation}
where
\begin{equation*}
C_{d,p,a}:=2\omega_{d-1}\int_{0}^{1}
\frac{t^{d-1}(1+t^2)\left(\min\left\{\log a,\log\frac{1}{t}\right\}\right)^p}
{(1-t^2)^{d+1}}\,dt.
\end{equation*}
\end{prop}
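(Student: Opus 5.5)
The plan is to compare the integrands of \eqref{WpdR} and of the claimed lower bound pointwise in $t\in(0,1)$, for a fixed $x$ with $0<|x|<R$. Everything else in both integrands, namely the common factor $\frac{2\om_{d-1}}{|x|^d}$ and the kernel $\frac{(1+t^2)t^{d-1}}{(1-t^2)^{d+1}}$, is positive. So it suffices to prove that for every $t\in(0,1)$,
\begin{equation*}
\frac{\left(\log\frac{1}{t}\right)^p}{\left(\log\frac{R}{t|x|}\right)^p}\ \ge\ \frac{\left(\min\left\{\log a,\log\frac{1}{t}\right\}\right)^p}{\left(\log\frac{Ra}{|x|}\right)^p}.
\end{equation*}
Since both sides are $p$-th powers of positive quantities, this is equivalent to
\begin{equation*}
\log\frac{Ra}{|x|}\,\log\frac1t\ \ge\ \min\left\{\log a,\log\frac1t\right\}\,\log\frac{R}{t|x|}.
\end{equation*}
Integrating this pointwise bound in $t$ against the positive kernel then gives \eqref{estimates for W} with exactly the constant $C_{d,p,a}$.

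I would verify the pointwise inequality by splitting at $t=1/a$, following the two observations made just before the statement.

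\emph{Case $1/a\le t<1$.} Here $\log\frac1t\le\log a$, so the minimum equals $\log\frac1t$. The claim reduces to $\log\frac{R}{t|x|}\le\log\frac{Ra}{|x|}$, which is the same as $\frac1t\le a$, and this holds in this range.

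\emph{Case $0<t<1/a$.} Here the minimum equals $\log a$. The required inequality is
\begin{equation*}
\log\frac{R}{t|x|}\le \log\frac{Ra}{|x|}\cdot\frac{\log\frac1t}{\log a}.
\end{equation*}
This is the displayed estimate preceding the proposition. To derive it, write $\log\frac{R}{t|x|}=\log\frac{Ra}{|x|}+\log\frac{1}{at}$, where $\log\frac1{at}>0$. Since $|x|<R$, we have $\log\frac{Ra}{|x|}>\log a$, and therefore $\log\frac1{at}\le \log\frac{Ra}{|x|}\cdot\frac{\log\frac1{at}}{\log a}$. Adding these gives $\log\frac{Ra}{|x|}\bigl(1+\frac{\log(1/(at))}{\log a}\bigr)=\log\frac{Ra}{|x|}\cdot\frac{\log(1/t)}{\log a}$, as needed.

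There is no real obstacle, since the argument is an elementary pointwise comparison. The one point to check is that every logarithm involved is positive, so that raising to the power $p$ and dividing are legitimate. This holds because $t<1$, $a>1$ and $|x|<R$. It is also worth noting that $C_{d,p,a}<\infty$. Near $t=1$ the minimum is $\log\frac1t\sim 1-t$, so the integrand behaves like $(1-t)^{p-d-1}$, which is integrable since $p>d$. Near $t=0$ the integrand is $O(t^{d-1})$, which is integrable. Hence the bound is nontrivial, although finiteness is not actually needed for the inequality itself.
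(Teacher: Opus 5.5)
Your proof is correct and is the paper's argument: a pointwise comparison of the integrands, split at $t=1/a$, using $\log\frac{R}{t|x|}\le\log\frac{Ra}{|x|}$ on $(1/a,1)$ and $\log\frac{R}{t|x|}\le\log\frac{Ra}{|x|}\cdot\frac{\log(1/t)}{\log a}$ on $(0,1/a)$. Your extra check that $C_{d,p,a}<\infty$ when $p>d$ is a small addition that the paper does not state explicitly.
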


The following corollary shows that the fractional Hardy inequality established in Theorem \ref{Thm: logarithmic frac Hardy inequality} recovers a fractional Hardy inequality of the form \eqref{Critical fractional Hardy}, with an explicit logarithmic weight and a constant inherited from the theorem and the above proposition.

\begin{cor}
Let $p>d\geq 1$, and let
$u\in C_c(B_R)$, where $R>0$. Then for any $a>1$, the following fractional Hardy inequality holds,
\begin{equation}\label{log Ra}
    \int_{\mathbb{R}^d}\int_{\mathbb{R}^d}
\frac{|u(x)-u(y)|^p}{|x-y|^{2d}}\,dy\,dx
\geq
\left(\frac{p-1}{p}\right)^p C_{d,p,a}
\int_{B_R}
\frac{|u(x)|^p}
{|x|^d\left(\log\frac{Ra}{|x|}\right)^p}\,dx.
\end{equation}
\end{cor}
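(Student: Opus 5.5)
The corollary follows by combining Theorem \ref{Thm: logarithmic frac Hardy inequality} with the pointwise lower bound \eqref{estimates for W} from the preceding proposition. Fix $a>1$ and $u\in C_c(B_R)$. Theorem \ref{Thm: logarithmic frac Hardy inequality} gives
\begin{equation*}
\int_{\R^d}\int_{\R^d}\frac{|u(x)-u(y)|^p}{|x-y|^{2d}}\,dy\,dx\ge\left(\frac{p-1}{p}\right)^p\int_{B_R}|u(x)|^p W_{p,d,R}(x)\,dx .
\end{equation*}
On $B_R\setminus\{0\}$ the proposition gives $W_{p,d,R}(x)\ge C_{d,p,a}\,|x|^{-d}\left(\log\frac{Ra}{|x|}\right)^{-p}$. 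We multiply this by $|u(x)|^p\ge 0$ and integrate over $B_R$; the origin is a null set. This yields \eqref{log Ra}. Since all integrands are nonnegative, integrating the pointwise inequality is legitimate even if one side is infinite.

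For completeness, I would recheck the proposition, since the corollary rests on it. We split the $t$-integral in \eqref{WpdR} at $t=1/a$.

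\textbf{Case $1/a<t<1$.} Here $\frac{R}{t|x|}<\frac{Ra}{|x|}$, so $\left(\log\frac{1}{t}\right)^p\left(\log\frac{R}{t|x|}\right)^{-p}\ge\left(\log\frac1t\right)^p\left(\log\frac{Ra}{|x|}\right)^{-p}$. In this range $\log\frac1t=\min\{\log a,\log\frac1t\}$.

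\textbf{Case $0<t<1/a$.} The displayed estimate before the proposition gives $\log\frac{R}{t|x|}\le\log\frac{Ra}{|x|}\cdot\frac{\log(1/t)}{\log a}$, which relies on $\log\frac{Ra}{|x|}>\log a$ for $|x|<R$. Hence $\left(\log\frac1t\right)^p\left(\log\frac{R}{t|x|}\right)^{-p}\ge(\log a)^p\left(\log\frac{Ra}{|x|}\right)^{-p}$, and here $\log a=\min\{\log a,\log\frac1t\}$.

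Adding the two ranges and pulling out $2\om_{d-1}|x|^{-d}\left(\log\frac{Ra}{|x|}\right)^{-p}$ gives exactly $C_{d,p,a}$.

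The only point needing care is that $C_{d,p,a}$ is finite. Otherwise the inequality holds but is vacuous for every nonzero $u$, because the right-hand side is then infinite while the left-hand side is finite for, say, Lipschitz $u$. Near $t=0$ the integrand behaves like $t^{d-1}(\log a)^p$, which is integrable. Near $t=1$ we have $\log\frac1t\sim 1-t$, so the integrand behaves like $(1-t)^{p}(1-t)^{-(d+1)}$ up to constants. This is integrable precisely because $p>d$, which is the hypothesis of the corollary. I expect this endpoint check to be the only substantive step, and it reduces to the assumption $p>d$.
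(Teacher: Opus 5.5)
Your argument is correct and matches the paper's: the corollary is Theorem~\ref{Thm: logarithmic frac Hardy inequality} combined with the pointwise bound \eqref{estimates for W}, and your re-derivation of that bound by splitting the $t$-integral at $t=1/a$ reproduces the paper's computation. Your check that $C_{d,p,a}<\infty$ precisely because $p>d$ is a worthwhile detail the paper leaves implicit; one small wording point is that if $C_{d,p,a}$ were infinite, the inequality would be false rather than vacuous for nonzero Lipschitz $u$, though this is moot under $p>d$.
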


It is natural to consider the limiting case $a\to 1^+$ in the preceding corollary. In this limit, the logarithmic weight $\log\frac{Ra}{|x|}$ approaches $\log\frac{R}{|x|}$. However, the resulting inequality does not hold with any positive constant, as the following theorem shows.
\begin{thm}\label{Thm: Failure of logarithmic Hardy inequality}
Let $p>d, \, d \geq 1$. It holds
$$
\inf\left\{\frac{\displaystyle\int_{\R^d}\int_{\R^d}\frac{|u(x)-u(y)|^p}{|x-y|^{2d}}\,dy\,dx}{\displaystyle\int_{B_R}\frac{|u(x)|^p}{|x|^{d}\left(\log\frac{R}{|x|}\right)^p}\,dx}\colon u\in C_c(B_R),\,u\not\equiv  0\right\}=0.
$$
In other words, there exists no positive constant $C=C(d,p,R)$ such that the inequality 
$$
\int_{\R^d}\int_{\R^d}\frac{|u(x)-u(y)|^p}{|x-y|^{2d}}\,dy\,dx\ge C\int_{B_R}\frac{|u(x)|^p}{|x|^{d}\left(\log\frac{R}{|x|}\right)^p}dx
$$
holds for all $u\in C_c(B_R)$.
\end{thm}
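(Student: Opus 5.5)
The plan is to exploit the fact that the weight $|x|^{-d}(\log\frac{R}{|x|})^{-p}$ is far more singular at the boundary $\partial B_R$ than the true Hardy weight $W_{p,d,R}$. As $|x|\to R^-$ we have $\log\frac{R}{|x|}\sim\frac{R-|x|}{R}$, so this weight behaves like $R^{p-d}(R-|x|)^{-p}$. Since $p>1$, this is not integrable up to the boundary, whereas $W_{p,d,R}$ only blows up like $(\log\frac{R}{|x|})^{-d}$ there, by the asymptotics announced in the introduction. I will therefore test the quotient on plateau functions that equal $1$ on almost all of $B_R$ and drop to $0$ in a thin boundary layer. The denominator should blow up like $\varepsilon^{1-p}$, while the Gagliardo energy only grows like $\varepsilon^{1-d}$ (or $\log\frac1\varepsilon$ when $d=1$).

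Concretely, for small $\varepsilon>0$ let $u_\varepsilon(x)=\min\{1,\ (R-\varepsilon-|x|)_+/\varepsilon\}$. Then $u_\varepsilon\in C_c(B_R)$, $u_\varepsilon\equiv1$ on $B_{R-2\varepsilon}$, $u_\varepsilon\equiv 0$ outside $B_{R-\varepsilon}$, and $u_\varepsilon$ is $\varepsilon^{-1}$-Lipschitz. For the denominator, restrict to the annulus $R/2<|x|<R-2\varepsilon$, where $u_\varepsilon=1$ and $\log\frac{R}{|x|}\le C(R-|x|)/R$. This gives a lower bound $c\int_{2\varepsilon}^{R/2}\delta^{-p}\,d\delta\ge c'\varepsilon^{1-p}$, with $c'$ depending only on $d,p,R$.

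For the numerator I use $|u_\varepsilon(x)-u_\varepsilon(y)|\le\min\{1,|x-y|/\varepsilon\}$ and split the pairs $(x,y)$ into two types. The first type has at least one point in the layer $A_\varepsilon=\{R-2\varepsilon\le|x|\le R-\varepsilon\}$. For $x\in A_\varepsilon$, rescaling gives $\int_{\R^d}\min\{1,|h|/\varepsilon\}^p|h|^{-2d}\,dh = C\varepsilon^{-d}$, which converges because $p>d$ and $2d>d$. Since $|A_\varepsilon|\approx\varepsilon$, these pairs contribute $O(\varepsilon^{1-d})$. The second type has $x\in B_{R-2\varepsilon}$ and $y\notin B_{R-\varepsilon}$, or vice versa, and there the integrand is exactly $|x-y|^{-2d}$. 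Writing $\delta=R-\varepsilon-|x|\ge\varepsilon$, we get $\int_{|y|\ge R-\varepsilon}|x-y|^{-2d}dy\lesssim\delta^{-d}$. Integrating this against the shell measure $\lesssim d\delta$ for $\delta\in(\varepsilon,R)$ gives $O(\varepsilon^{1-d})$ when $d>1$ and $O(\log\frac1\varepsilon)$ when $d=1$. All other pairs contribute nothing, because both points lie where $u_\varepsilon$ takes the same constant value.

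Putting the two bounds together, the quotient is $O(\varepsilon^{p-d})$ for $d\ge2$ and $O(\varepsilon^{p-1}\log\frac1\varepsilon)$ for $d=1$, and both tend to $0$ as $\varepsilon\to0$ because $p>d$. Hence the infimum is $0$. I expect the main technical point to be the second type of pairs in the energy estimate. There one needs a uniform bound $\int_{|y|\ge R-\varepsilon}|x-y|^{-2d}\,dy\le C\,\mathrm{dist}(x,\partial B_{R-\varepsilon})^{-d}$ for $x$ inside the ball, with $C$ independent of $\varepsilon$. I would get it by enclosing the complement of the ball in the complement of the open ball $B(x,\delta)$, which is crude but sufficient.
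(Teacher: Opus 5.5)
Your proof is correct, but it takes a different route from the paper's.

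\textbf{The paper's argument.} It uses a bump in logarithmic coordinates, $u_\varepsilon(x)=\varphi\bigl(\varepsilon^{-1}\log\frac{R}{|x|}\bigr)$ with $\varphi\in C^1_c((1,2))$. This function vanishes on both sides of the thin shell $Re^{-2\varepsilon}\le|x|\le Re^{-\varepsilon}$. The denominator is computed exactly as $\omega_{d-1}\varepsilon^{1-p}\int_1^2|\varphi(t)|^pt^{-p}\,dt$. For the energy, the paper applies the radial identity of Lemma~\ref{lem: critical Gagliardo seminorm} and substitutes $r=Re^{-z}$. It then bounds the resulting kernel $2^{-d}\cosh(z-w)\,|\sinh(z-w)|^{-d-1}$ by $C|z-w|^{-d-1}$ and rescales the one-dimensional seminorm $[\varphi]^p_{W^{d/p,p}(\R)}$. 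This gives $O(\varepsilon^{1-d})$ for the energy and a ratio of order $\varepsilon^{p-d}$.

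\textbf{Your argument.} You take a plateau equal to $1$ on $B_{R-2\varepsilon}$. Your $\varepsilon^{1-p}$ comes from the non-integrable singularity $(R-|x|)^{-p}$ accumulated over the plateau. You bound the energy by hand, using the Lipschitz bound inside the layer and the tail estimate $\int_{|y-x|\ge\delta}|x-y|^{-2d}\,dy=\frac{\omega_{d-1}}{d}\delta^{-d}$ for the jump across it. All your steps check out, including the split between $d=1$ and $d\ge2$.

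\textbf{What your route buys.} It needs neither the radial formula, with its hypergeometric evaluation, nor the logarithmic substitution. It also makes plain that the failure is a pure boundary effect. An energy with $sp=d$ can only pay for a weight of size $\mathrm{dist}(x,\partial B_R)^{-d}$ near the boundary, whereas $(\log\frac{R}{|x|})^{-p}$ behaves like $\mathrm{dist}(x,\partial B_R)^{-p}$ there. The same computation therefore rules out any weight bounded below near $\partial B_R$ by $c\,(R-|x|)^{-q}$ with $q>d$.

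\textbf{What the paper's route buys.} Its test function vanishes on both sides of the shell, so there is no long-range interaction between a plateau and the exterior. This gives exactly the rate $\varepsilon^{p-d}$ for every $d\ge1$. You lose a factor $\log\frac{1}{\varepsilon}$ when $d=1$, which is harmless since $p>1$. The paper's argument also stays inside the radial and logarithmic framework used throughout.
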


In the local setting, the Hardy inequality in the critical case holds with $a=1$; see \eqref{Critical Hardy inequality}. Moreover, this inequality can be recovered by taking the limit $p\to d$ in the fractional Hardy inequality established in Theorem \ref{Thm: logarithmic frac Hardy inequality}. In this regard, the Bourgain--Brezis--Mironescu formula \cite{MR3586796} (also see \cite[Corollary $3.20$]{MR4480576}) plays a crucial role. Recall that, for $u \in C^{1}_{c}(B_{R})$,
\begin{equation}\label{BBM}
  \lim_{s\rightarrow 1}(1-s)\int_{\R^d}\int_{\R^d}\frac{|u(x)-u(y)|^p}{|x-y|^{d+sp}}\,dy\,dx=\frac{2\pi^{\frac{d-1}{2}}\Gamma\left(\tfrac{1+p}{2}\right)}{p\,\Gamma\left(\tfrac{d+p}{2}\right)}\int_{B_R}|\nabla u(x)|^p\,dx.  
\end{equation}

In our case, $p=p(s)=\tfrac{d}{s}$ depends on $s$, therefore, \eqref{BBM} cannot be directly applied. However, in the Appendix (see Theorem \ref{Thm: BBM type Formula}, Section \ref{Section 4}), we derive the modified version of the Bourgain--Brezis--Mironescu formula with varying parameters, i.e. we show that
\begin{equation}\label{modifiedBBM}
\lim_{p \to d^+}(p-d)
\int_{\R^d}\int_{\R^d}
\frac{|u(x)-u(y)|^{p}}{|x-y|^{2d}}\,dy\,dx =
\frac{2\pi^{\frac{d-1}{2}}
\Gamma\left(\frac{1+d}{2}\right)}
{\Gamma(d)}
\int_{B_R}|\nabla u(x)|^d\,dx,\, \, u\in C_c^{1}(B_R).
\end{equation}
Also, note that
\begin{equation*}
  \frac{\pi^{\frac{d-1}{2}}\Gamma(\tfrac{1+d}{2})}{\Gamma(d)}= 2^{-d}\om_{d-1}.  
\end{equation*}
Furthermore, in Lemma \ref{Lemma: limit Wpdr} with $p>d$, we establish the following asymptotic behaviour of the weight function,
\begin{equation*}
 \lim_{p \to d^+}(p-d) W_{p,d,R}(x)
=
\frac{2^{1-d}\omega_{d-1}}
{|x|^d\left(\log\frac{R}{|x|}\right)^d}.
\end{equation*}
This asymptotic formula describes the behaviour of the fractional weight as $p$ approaches the critical exponent $d$ and provides a connection between the fractional and local Hardy inequalities. We also have 
\begin{equation*}
    \left(\frac{p-1}{p}\right)^{p}\rightarrow \left(\frac{d-1}{d}\right)^d,\quad \text{as } p\rightarrow d.
\end{equation*}

By taking the limit as $p\to d$ and using the above asymptotic behaviour of the weight function together with \eqref{modifiedBBM}, we obtain the following corollary, being a consequence of \eqref{Critical Hardy inequality}.
\begin{cor}
    Let $d \geq 2$ and $p=d$. Then for all $u \in C^{1}_{c}(B_{R})$, the following Hardy inequality holds,
\begin{equation*}
    \int_{B_R}|\nabla u(x)|^d\,dx\ge\left(\frac{d-1}{d}\right)^d\int_{B_R}\frac{|u(x)|^d}{|x|^d\left(\log\frac{R}{|x|}\right)^d}\,dx.
\end{equation*}
\end{cor}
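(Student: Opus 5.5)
The plan is to pass to the limit $p\to d^+$ in inequality \eqref{Hardy with weight W} of Theorem \ref{Thm: logarithmic frac Hardy inequality}, after multiplying both sides by $(p-d)$. Fix $u\in C^1_c(B_R)$ and $d\ge 2$, and take $p>d$. Since $p>d\ge 1$, the theorem applies to $u$ and gives
\begin{equation*}
(p-d)\int_{\R^d}\int_{\R^d}\frac{|u(x)-u(y)|^p}{|x-y|^{2d}}\,dy\,dx\ge\left(\frac{p-1}{p}\right)^p\int_{B_R}|u(x)|^p\,(p-d)W_{p,d,R}(x)\,dx .
\end{equation*}

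For the left-hand side we use the modified Bourgain--Brezis--Mironescu formula \eqref{modifiedBBM} (Theorem \ref{Thm: BBM type Formula} in the Appendix). Together with the identity $\pi^{\frac{d-1}{2}}\Gamma(\tfrac{1+d}{2})/\Gamma(d)=2^{-d}\om_{d-1}$, it shows that the left-hand side converges to $2^{1-d}\om_{d-1}\int_{B_R}|\nabla u|^d\,dx$. On the right-hand side, $((p-1)/p)^p\to((d-1)/d)^d$ is immediate. The remaining task is to show
\begin{equation*}
\int_{B_R}|u|^p(p-d)W_{p,d,R}\,dx\to 2^{1-d}\om_{d-1}\int_{B_R}\frac{|u|^d}{|x|^d(\log\frac{R}{|x|})^d}\,dx .
\end{equation*}
Once this is done, the common factor $2^{1-d}\om_{d-1}$ cancels and the corollary follows.

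This limit is the main obstacle, because Lemma \ref{Lemma: limit Wpdr} gives only pointwise convergence of $(p-d)W_{p,d,R}$. To get around this, I would first apply Fatou's lemma. Since $|u|^p\to|u|^d$ pointwise, Fatou yields
\begin{equation*}
\liminf_{p\to d^+}\int_{B_R}|u|^p(p-d)W_{p,d,R}\,dx\ \ge\ 2^{1-d}\om_{d-1}\int_{B_R}\frac{|u|^d}{|x|^d(\log\frac{R}{|x|})^d}\,dx ,
\end{equation*}
and only this lower bound is needed for the inequality. Combining it with the limit of the left-hand side proves the corollary without any domination argument. Fatou is legitimate here because all the integrands are nonnegative.

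One subtlety remains. The right-hand integral may be infinite if $u$ does not vanish on $\partial B_R$ in a suitable sense. For $u\in C^1_c(B_R)$, however, the support stays away from $\partial B_R$, and the weight $|x|^{-d}(\log\frac{R}{|x|})^{-d}$ is integrable near the origin, since $\int_0 r^{-1}(\log\frac{R}{r})^{-d}\,dr<\infty$ for $d>1$. So the right-hand side is finite. Even if it were not, the inequality would force the gradient term to be infinite, which is impossible for $u\in C^1_c(B_R)$; hence no case analysis is needed.
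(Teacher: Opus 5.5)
Your proposal is correct and follows the paper's route. You multiply \eqref{Hardy with weight W} by $(p-d)$ and let $p\to d^+$, using the modified Bourgain--Brezis--Mironescu formula \eqref{modifiedBBM} on the left and Lemma \ref{Lemma: limit Wpdr} on the right; your Fatou argument makes rigorous the passage of the pointwise limit of $(p-d)W_{p,d,R}$ through the integral, which the paper leaves implicit, and only the resulting lower bound is needed.
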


\subsection{Fractional Hardy inequalities in half-space}

Let $\mathbb{R}^{d}_{+} :=\{x=(x',x_d)\in\mathbb{R}^{d-1} \times \mathbb{R}:x_d>0\}$. In the half-space, the distance to the boundary is given by \(x_d\), and the corresponding Hardy weight is
$x_d^{-sp}$. Bogdan and Dyda \cite{MR2663757} established the sharp fractional Hardy inequality in the half-space for $p=2$. This result was later extended by Frank and Seiringer \cite{MR2723817}, who employed a nonlinear and nonlocal ground state representation to establish the inequality for general $1 \leq p< \infty$ and $0<s<1$ in the
non-critical case $sp\neq 1$. They also identified the optimal
constant in the inequality. More precisely, for $ 1\leq p<\infty$, $0<s<1$, and $sp\neq 1$, they proved that
\begin{equation}\label{eq:FS fractional Hardy}
\int_{\mathbb{R}^{d}_{+}} \int_{\mathbb{R}^{d}_{+}}
\frac{|u(x)-u(y)|^p}{|x-y|^{d+sp}}\,dx\,dy \geq \mathcal{D}_{d,p,s} \int_{\mathbb{R}^{d}_{+}}
\frac{|u(x)|^p}{x_d^{sp}}\,dx,
\end{equation}
for all $u\in C_{c}(\mathbb{R}^{d}_{+})$, where the optimal
constant is given by
\begin{equation}\label{eq:FS sharp constant}
\mathcal{D}_{d,p,s} = 2\pi^{\frac{d-1}{2}} \frac{\Gamma\bigl(\frac{1+sp}{2}\bigr)} {\Gamma\bigl(\frac{d+sp}{2}\bigr)} \int_0^1 \frac{\bigl|1-r^{(sp-1)/p}\bigr|^p}
{(1-r)^{1+sp}}\,dr.
\end{equation}
The constant $\mathcal{D}_{d,p,s}$ is sharp, meaning that it cannot be replaced by a larger constant.

The case $sp=1$ is critical and is excluded from the above result. Indeed, the expression for the sharp constant in \eqref{eq:FS sharp constant} vanishes for $sp=1$. Thus, the critical case requires a separate analysis and cannot be obtained directly from the non-critical inequality above.

Fractional Hardy inequalities with optimal logarithmic corrections in the critical case $sp=1$ have been studied on bounded Lipschitz domains; see \cite{MR5026388, MR5018356}. Generalisations to smooth submanifolds can be found in \cite{MR5015195, MR4849884}. However, the optimality of the constants in these inequalities is not addressed in the aforementioned works.

\smallskip

 In the following theorem, we establish a fractional Hardy inequality in the half-space for $p>1$ with $sp=1$, under a suitable support condition on $u$. In particular, we obtain an explicit Hardy weight $V_{p,d,R}$ that depends on the distance to the boundary and the parameter $R$. For technical purposes, we fix a dimensional constant
$$
\kappa_d:=\frac{2\pi^{\frac{d-1}{2}}}
{\Gamma\left(\frac{d+1}{2}\right)}.
$$

\begin{thm}\label{thm:critical fractional Hardy half-space}
Let $p>1$ and $R>0$. Suppose that $u\in C_c(\mathbb{R}^d_+)$ satisfies $\operatorname{supp}u\subset A_R :=\{x\in\mathbb{R}^d_+:x_d<R\}$.  Then
\begin{equation}\label{halfspace}
\int_{\mathbb{R}^d_+}\int_{\mathbb{R}^d_+}
\frac{|u(x)-u(y)|^p}{|x-y|^{d+1}}\,dy\,dx
\geq
\left(\frac{p-1}{p}\right)^p
\int_{A_R}|u(x)|^p V_{p,d,R}(x_d)\,dx,
\end{equation}
where
\begin{equation}\label{VpdR}
V_{p,d,R}(x_d)
=
\frac{\kappa_d}{x_d}
\int_0^1
\frac{\left(\log\frac{1}{t}\right)^p}
{(1-t)^2\left(\log\frac{R}{tx_d}\right)^p}\,dt.
\end{equation} 

Moreover, the constant $((p-1)/p)^p$ in \eqref{halfspace} is sharp.
\end{thm}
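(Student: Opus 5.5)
The plan is to use the Frank--Seiringer nonlinear ground state representation with a one-dimensional logarithmic ground state. Write $x=(x',x_d)$ and take $w(x)=w(x_d)$ with $w(t)=\left(\log\frac{R}{t}\right)^{(p-1)/p}$ for $0<t<R$; this is the critical-case analogue of the power $t^{(sp-1)/p}$, which degenerates when $sp=1$. The abstract inequality we rely on is
\begin{equation*}
\int\!\!\int\frac{|u(x)-u(y)|^p}{|x-y|^{d+1}}\,dy\,dx\ \ge\ 2\int |u(x)|^p\,\frac{\mathrm{p.v.}\!\int (w(x)-w(y))|w(x)-w(y)|^{p-2}|x-y|^{-d-1}\,dy}{w(x)^{p-1}}\,dx .
\end{equation*}
It follows from the elementary inequality $|a-b|^p\ge \ldots$ used by Frank--Seiringer, applied with $u=wv$. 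Since $\operatorname{supp}u\subset A_R$, we only need the resulting potential at points with $x_d<R$.

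We then reduce to one dimension. Integrate out $y'\in\R^{d-1}$ using
\begin{equation*}
\int_{\R^{d-1}}\frac{dy'}{(|x'-y'|^2+|x_d-y_d|^2)^{(d+1)/2}}=\frac{\kappa_d}{2}\,|x_d-y_d|^{-2},
\end{equation*}
with the constant normalised to match $\kappa_d$; this gives $\kappa_d/2$ because $\pi^{(d-1)/2}\Gamma(1)/\Gamma(\frac{d+1}{2})=\kappa_d/2$. The problem becomes the one-dimensional kernel $|s-t|^{-2}$ on $(0,\infty)$. Restricting $y$ to the half-space, we do not need an exact computation of the potential, only the lower bound with constant $((p-1)/p)^p$.

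The core step follows the proof of Theorem \ref{Thm: logarithmic frac Hardy inequality}, which is presumably the template. Split $y_d=tx_d$ into $t<1$ and $t>1$, and use the substitution $t\mapsto 1/t$ on the second piece to fold it onto $(0,1)$. The factor $(1-t)^{-2}$ in \eqref{VpdR} is exactly what remains after this folding, with $dt/(x_d(1-t)^2)$. On $(0,1)$ we have $w(tx_d)\ge w(x_d)$, since $w$ is decreasing. The key pointwise inequality is a convexity/tangent-line estimate for $F(\lambda)=\lambda^{(p-1)/p}$ in the variables $\lambda=\log\frac{R}{x_d}$ and $\lambda+\log\frac1t$. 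We need to show that the combined contribution of $t$ and its reflection $1/t$ is at least
\begin{equation*}
\left(\tfrac{p-1}{p}\right)^p\frac{(\log\frac1t)^p}{(\log\frac{R}{tx_d})^p}\,w(x_d)^{p-1}.
\end{equation*}
Where $y_d>R$ we have $w$ undefined or negative. We will instead extend $w$ by $0$ beyond $R$; this only increases the potential, or we restrict to the region where positivity is clear. We expect this pairing estimate, which must hold with exactly the constant $((p-1)/p)^p$ for all $t$, to be the main obstacle. The first approach to try is the Hölder/mean-value bound $a^{(p-1)/p}-b^{(p-1)/p}\ge \frac{p-1}{p}(a-b)a^{-1/p}$ for $a>b$, applied only on the branch $t<1$. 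The branch $t>1$ should be handled by showing it is nonnegative after pairing, using monotonicity of $F$.

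For sharpness, we take test functions $u_\varepsilon(x)=\phi(x')\,w_\varepsilon(x_d)$, with $w_\varepsilon$ a truncation of $\left(\log\frac{R}{x_d}\right)^{(p-1)/p-\varepsilon}$ cut off near $x_d=0$ and near $x_d=R$, and $\phi$ a wide cutoff in $x'$ of scale $L\to\infty$. Both sides then blow up like $\varepsilon^{-1}$, driven by the logarithmic divergence near the boundary. The weight satisfies $V_{p,d,R}(x_d)\sim \kappa_d C\, x_d^{-1}(\log\frac{R}{x_d})^{-p}$ as $x_d\to0$, and the ratio of the two sides tends to $((p-1)/p)^p$. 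This is analogous to the sharpness argument for Theorem \ref{Thm: logarithmic frac Hardy inequality}. The $x'$-cutoff error is $O(L^{d-2})$ against a main term of size $L^{d-1}$, so it is negligible.
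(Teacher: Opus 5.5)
Your sharpness argument is fine. The inequality part, however, has a genuine gap: the step you flag as the main obstacle is false, and not only in its pointwise form.

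Put $\gamma=\frac{p-1}{p}$, $\lambda=\log\frac{R}{x_d}$, $\ell=\log\frac1t$, and extend $w$ by $0$ for $x_d\ge R$; this correctly produces the killing term from $\{y_d>R\}$. After integrating out $y'$ and folding $t\mapsto1/t$, your potential is
\[
V_w(x)=\frac{\kappa_d}{x_d\,\lambda^{\gamma(p-1)}}\int_0^1\frac{\bigl(\lambda^{\gamma}-(\lambda-\ell)_+^{\gamma}\bigr)^{p-1}-\bigl((\lambda+\ell)^{\gamma}-\lambda^{\gamma}\bigr)^{p-1}}{(1-t)^2}\,dt .
\]

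\emph{The pairing estimate is false.} The signs are the reverse of what you describe. On $t<1$ one has $w(tx_d)>w(x_d)$, so that branch is the negative one. The reflected point $x_d/t$ gives the positive part, which is at most $w(x_d)^{p-1}$. Hence the paired numerator is negative as soon as $\ell>(2^{1/\gamma}-1)\lambda$, and it tends to $-\infty$ as $t\to0^+$. The bound you need tends to $\gamma^p w(x_d)^{p-1}>0$, so no pointwise-in-$t$ estimate, mean-value or otherwise, can hold.

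\emph{The integrated potential also fails.} Near $x_d=R$ the target $\gamma^pV_{p,d,R}$ behaves like $\frac{\gamma^p\kappa_d}{(p-1)(R-x_d)}$.
For $p=2$, $w\approx\bigl(\frac{R-x_d}{R}\bigr)^{1/2}$ near $x_d=R$, and $(R-\tau)_+^{1/2}$ is $(-\Delta)^{1/2}$-harmonic on $(-\infty,R)$. So the killing term $\frac{\kappa_d}{R-x_d}$ is cancelled to leading order by the contribution of $\{y_d<R\}$. This gives $V_w(x)=O\bigl((R-x_d)^{-1/2}\bigr)$, far below $\frac{\kappa_d}{4(R-x_d)}$.
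For $(p-1)^2>p$ it is worse. The unnormalised contribution of $\{y_d<x_d\}$, where $w(y)>w(x)$, tends to a negative constant, and the rest tends to $0$. Dividing by $w(x)^{p-1}\to0$ gives $V_w(x)\to-\infty$ as $x_d\to R^-$.
So $V_{p,d,R}$ is not the Frank--Seiringer potential of $(\log\frac{R}{x_d})_+^{\gamma}$, and this ground state cannot prove the theorem.

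\emph{What the paper does instead.} It avoids a single ground state altogether. In one dimension it writes the energy as
\[
2\int_0^1(1-t)^{-2}\int_0^\infty|f(r)-f(rt)|^p\,\frac{dr}{r}\,dt
\]
and proves, for each fixed $t$, a separate sharp inequality.
\begin{itemize}
\item The substitution $r=\frac{R}{t}e^{-\tau}$ turns dilation by $t$ into translation by $a=\log\frac1t$.
\item Split $\tau$ into arithmetic progressions $\tau+ka$. Apply the discrete Hardy inequality $\sum_{k\ge1}|u_k-u_{k-1}|^p\ge\gamma^p\sum_{k\ge1}k^{-p}|u_k|^p$, where $u_0=0$ because $\operatorname{supp}f\subset(0,R]$.
\item Finish with $k^{-p}\ge a^p(\tau+ka)^{-p}$.
\end{itemize}
The weight $V_{p,d,R}$ is the superposition over $t$ of these inequalities.

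Once the ground state is abandoned, integrating out $y'$ no longer reduces the problem for general $u$. The paper therefore uses the Loss--Sloane formula. A line in direction $\om$ with $\om_d>0$ meets $\R^d_+$ in a half-line, on which $u$ is supported in a segment of length $R/\om_d$ at its endpoint. The identity $\int_{\Sd}|\om_d|\,d\om=\kappa_d$ then gives the constant.

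Your sharpness argument is essentially the paper's: product test functions, with the $x'$-cutoff error of lower order. You should make explicit that the one-dimensional energy is bounded above by H\"older along dilations, as in the proof of Theorem~\ref{Thm: logarithmic frac Hardy inequality}.
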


In Section \ref{Section 3}, we derive the asymptotic formulae for the weight $V_{p,d,R}$ near the boundary of $A_R$. Namely, we show that 
\begin{align*}
V_{p,d,R}(x_d)&\sim \frac{\kappa_d\Ga(p+1)\ze(p)}{x_d\left(\log\frac{R}{x_d}\right)^p},\quad\text{ as }x_d\rightarrow 0^+,\\
V_{p,d,R}(x_d)&\sim\frac{\kappa_d}{(p-1)}\frac{1}{R\log\frac{R}{x_d}},\quad\text{ as }x_d\rightarrow R^-.
\end{align*}
In particular, $V_{p,d,R}$ is singular near the boundary of $A_R$. As in the case of the whole space, by a simple estimate, we have the following corollary.
\begin{cor}\label{corr:critical fractional Hardy half-space}
Let $p>1$, $R>0$, $a>1$. Suppose that $u\in C_c(\mathbb{R}^d_+)$ satisfies $\operatorname{supp}u\subset A_R=\{x\in\mathbb{R}^d_+:x_d<R\}$. Then, 
\begin{equation}\label{halfspace a>1}
\int_{\mathbb{R}^d_+}\int_{\mathbb{R}^d_+}
\frac{|u(x)-u(y)|^p}{|x-y|^{d+1}}\,dy\,dx
\geq
\left(\frac{p-1}{p}\right)^pD_{p,d,a}\int_{A_R}\frac{|u(x)|^p}{x_d\left(\log\frac{Ra}{x_d}\right)^p}\,dx,
\end{equation}
where 
$$
D_{p,d,a}=\kappa_d\int_{0}^1\frac{\left(\min\left\{\log a,\log\frac{1}{t}\right\}\right)^p}{(1-t)^2}\,dt.
$$
\end{cor}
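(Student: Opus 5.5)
The corollary follows from Theorem \ref{thm:critical fractional Hardy half-space} once we have a pointwise lower bound
\[
V_{p,d,R}(x_d)\ge \frac{D_{p,d,a}}{x_d\left(\log\frac{Ra}{x_d}\right)^p},\qquad 0<x_d<R.
\]
Substituting this into the right-hand side of \eqref{halfspace} gives \eqref{halfspace a>1} with the same constant $((p-1)/p)^p$. The plan is to repeat the argument that gave the whole-space proposition \eqref{estimates for W}. We split the $t$-integral in \eqref{VpdR} at $t=1/a$ and bound the logarithmic factor $\left(\log\frac{R}{tx_d}\right)^{-p}$ from below separately on each piece.

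\textbf{Range $1/a<t<1$.} Here $\frac{R}{tx_d}<\frac{Ra}{x_d}$. Both logarithms are positive, since $x_d<R$ and $t<1$. Hence $\left(\log\frac{R}{tx_d}\right)^{-p}\ge\left(\log\frac{Ra}{x_d}\right)^{-p}$. On this range $\log\frac1t<\log a$, so $\left(\log\frac1t\right)^p=\left(\min\{\log a,\log\frac1t\}\right)^p$.

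\textbf{Range $0<t<1/a$.} Write $\log\frac{R}{tx_d}=\log\frac{Ra}{x_d}+\log\frac{1}{at}$, and note that $\log\frac{1}{at}>0$ here. Since $\log\frac{Ra}{x_d}>\log a$, we get $\log\frac1{at}\le \log\frac{Ra}{x_d}\cdot\frac{\log\frac1{at}}{\log a}$. Therefore
\[
\log\frac{R}{tx_d}\le \log\frac{Ra}{x_d}\left(1+\frac{\log\frac{1}{at}}{\log a}\right)=\log\frac{Ra}{x_d}\cdot\frac{\log\frac1t}{\log a}.
\]
It follows that
\[
\frac{\left(\log\frac1t\right)^p}{\left(\log\frac{R}{tx_d}\right)^p}\ge\frac{(\log a)^p}{\left(\log\frac{Ra}{x_d}\right)^p},
\]
and on this range $\log a=\min\{\log a,\log\frac1t\}$.

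Combining the two ranges, the integrand in \eqref{VpdR} is bounded below by
\[
\frac{\left(\min\{\log a,\log\frac1t\}\right)^p}{(1-t)^2\left(\log\frac{Ra}{x_d}\right)^p}.
\]
Integrating over $(0,1)$ and multiplying by $\kappa_d/x_d$ gives the claimed lower bound.

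It remains to check that $D_{p,d,a}$ is finite, so that the statement is meaningful. Near $t=0$ the integrand is bounded by $(\log a)^p$. Near $t=1$ it behaves like $(1-t)^{p-2}$, because $\log\frac1t\sim 1-t$; this is integrable since $p>1$. This finiteness check is the only point needing any care. There is no real obstacle: the corollary is a direct pointwise comparison, followed by an application of Theorem \ref{thm:critical fractional Hardy half-space}.
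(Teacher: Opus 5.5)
Your proposal is correct and follows the paper's route. The paper deduces the corollary from Theorem~\ref{thm:critical fractional Hardy half-space} through the same pointwise lower bound on $V_{p,d,R}$, obtained by splitting the $t$-integral at $t=1/a$ exactly as in the whole-space estimate \eqref{estimates for W}; your finiteness check of $D_{p,d,a}$ is a harmless addition.
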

By methods similar to the proof of Theorem \ref{Thm: Failure of logarithmic Hardy inequality}, one can show that the inequality \eqref{halfspace a>1} fails for $a=1$, with any positive constant. 

\smallskip

\noindent\textbf{Organisation of the paper.}
The paper is organised as follows. In Section \ref{Section 2}, we prove Theorems \ref{Thm: logarithmic frac Hardy inequality} and \ref{Thm: Failure of logarithmic Hardy inequality}. We first establish an auxiliary integral representation for the fractional energy of radial functions and use it to prove the critical fractional Hardy inequality and the sharpness of its constant. We then show that the corresponding inequality with the simpler logarithmic weight does not hold with any positive constant. In addition, we investigate the asymptotic behaviour of the weight function $W_{p,d,R}$ as $p$ approaches the critical exponent $d$, and establish two-sided estimates and asymptotic formulae for this weight. In Section \ref{Section 3}, we prove Theorem \ref{thm:critical fractional Hardy half-space}. We also provide two-sided estimates and asymptotic formulae for the weight function $V_{p,d,R}$. Finally, in Section \ref{Section 4} (Appendix), we prove a Bourgain--Brezis--Mironescu-type formula for varying exponents $p>d$, as $p\to d$.

\section{Proof of Theorems \ref{Thm: logarithmic frac Hardy inequality} and \ref{Thm: Failure of logarithmic Hardy inequality} and Asymptotic behaviour of \texorpdfstring{$W_{p,d,R}$}{WpdR}}\label{Section 2}


\smallskip

We begin with a lemma that expresses the fractional energy of a radial function in terms of its one-dimensional radial profile. This identity was established by Iula \cite[Proposition 2.1]{MR3729591} for $d=1$ and generalised by Parini and Ruf \cite[Proposition 4.3]{MR3797987} for $d\ge 2$. For the sake of completeness, we provide a simple and more elementary proof.

\begin{lem}\label{lem: critical Gagliardo seminorm}
Let $u\colon\R^d\to\R$ be a measurable radial function with a radial profile $v$. Then, 
\begin{equation}
\int_{\R^d}\int_{\R^d}\frac{|u(x)-u(y)|^p}{|x-y|^{2d}}\,dy\,dx=\om_{d-1}^2\int_{0}^{\infty}\int_{0}^{\infty}\frac{|v(r)-v(t)|^p}{|r^2-t^2|^{d+1}}(rt)^{d-1}(r^2+t^2)\,dr\,dt.    
\end{equation}
\end{lem}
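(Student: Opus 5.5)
Pass to polar coordinates $x=r\sigma$, $y=t\theta$ with $\sigma,\theta\in\Sd$, so that $dx\,dy=r^{d-1}t^{d-1}\,dr\,dt\,d\sigma\,d\theta$. Since $u(x)-u(y)=v(r)-v(t)$ does not depend on the angles, Tonelli's theorem (everything is nonnegative) gives
\begin{equation*}
\int_{\R^d}\int_{\R^d}\frac{|u(x)-u(y)|^p}{|x-y|^{2d}}\,dy\,dx=\int_0^\infty\int_0^\infty |v(r)-v(t)|^p (rt)^{d-1} K(r,t)\,dr\,dt,
\end{equation*}
where
\begin{equation*}
K(r,t):=\int_{\Sd}\int_{\Sd}\frac{d\sigma\,d\theta}{(r^2+t^2-2rt\,\sigma\cdot\theta)^{d}}.
\end{equation*}
By rotation invariance the inner integral does not depend on $\sigma$. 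Hence $K(r,t)=\om_{d-1}J(r,t)$ with
\begin{equation*}
J(r,t):=\int_{\Sd}\frac{d\theta}{|r e_1-t\theta|^{2d}}.
\end{equation*}
It therefore suffices to prove
\begin{equation*}
J(r,t)=\om_{d-1}\frac{r^2+t^2}{|r^2-t^2|^{d+1}},\qquad r\neq t;
\end{equation*}
the diagonal $r=t$ has measure zero.

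The key step is computing $J$ without hypergeometric functions. Write $|re_1-t\theta|^{-2d}=|re_1-t\theta|^{-(d-2)}\cdot|re_1-t\theta|^{-(d+2)}$ and use harmonicity. For fixed $r$, the function $f(y)=|re_1-y|^{-2d}$ is smooth on $\{|y|<r\}$ and satisfies $\Delta |z|^{-\alpha}=\alpha(\alpha+2-d)|z|^{-\alpha-2}$. Spherical means of $|z|^{-\alpha}$ can be obtained through the Poisson kernel: for $|y|<r$,
\begin{equation*}
\frac{r^{d-2}(r^2-|y|^2)}{|re_1-y|^{d}}
\end{equation*}
is, up to normalisation, the Poisson kernel of the ball $B_r$. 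This suggests the cleanest route. Note that
\begin{equation*}
\frac{1}{|re_1-t\theta|^{2d}}=\left(\frac{1}{|re_1-t\theta|^{d}}\right)^2,
\end{equation*}
so $J$ is the squared $L^2(\Sd)$ norm of a Poisson-type kernel. Let $P(\xi,\eta)=\frac{1-|\xi|^2}{|\eta-\xi|^{d}}$ for $|\xi|<1$, $|\eta|=1$; its spherical mean in $\eta$ equals $\om_{d-1}$. Then the reproducing property for harmonic functions gives
\begin{equation*}
\int_{\Sd}P(\xi,\eta)P(\zeta,\eta)\,d\eta=\om_{d-1}\,\frac{1-|\xi|^2|\zeta|^2}{|e-|\xi||\zeta|\,\cdot|^{d}}.
\end{equation*}
This is the Poisson kernel at radius $|\xi||\zeta|$, from the semigroup property $P_{\rho_1}*P_{\rho_2}=P_{\rho_1\rho_2}$. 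Taking $\xi=\zeta=(t/r)e_1$ with $t<r$, and using the symmetry $|re_1-t\theta|=|te_1-r\theta|$, yields
\begin{equation*}
(1-\rho^2)^2\int_{\Sd}\frac{d\theta}{|e_1-\rho\theta|^{2d}}=\om_{d-1}\frac{1-\rho^4}{(1-\rho^2)^{d}},\qquad \rho=t/r.
\end{equation*}
Hence
\begin{equation*}
\int_{\Sd}\frac{d\theta}{|e_1-\rho\theta|^{2d}}=\om_{d-1}\frac{1+\rho^2}{(1-\rho^2)^{d+1}}.
\end{equation*}
Rescaling by $r^{-2d}$ gives $J=\om_{d-1}\frac{r^2+t^2}{(r^2-t^2)^{d+1}}$ for $t<r$. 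The case $t>r$ follows by the symmetry $|re_1-t\theta|=|te_1-r\theta|$.

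The main obstacle is justifying the semigroup identity $\int_{\Sd} P(\rho e_1,\eta)P(\rho e_1',\eta)\,d\eta$ cleanly, and covering $d=1$. For the identity, I would argue that $h(\zeta):=\int_{\Sd} P(\xi,\eta)P(\zeta,\eta)\,d\eta/\om_{d-1}$ is the harmonic extension in $\zeta$ of $P(\xi,\cdot)$. Since $\eta\mapsto P(\xi,\eta)/\om_{d-1}$ is itself the boundary trace of the harmonic function $\zeta\mapsto P(|\zeta|\xi,\zeta/|\zeta|)$, this is harmonic in $\zeta$ by the Kelvin-type symmetry $P(\xi,\zeta)=P(|\zeta|\xi,\zeta/|\zeta|)$ with $|\zeta|=1$. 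Uniqueness of harmonic extension then gives $h(\zeta)=P(|\zeta|\xi,\zeta/|\zeta|)$. The case $d=1$, where $\Sd=\{\pm1\}$ and $\om_0=2$, is checked directly:
\begin{equation*}
\frac{1}{(r-t)^2}+\frac{1}{(r+t)^2}=\frac{2(r^2+t^2)}{(r^2-t^2)^2}.
\end{equation*}
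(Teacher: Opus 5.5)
Your proof is correct. The polar-coordinate and rotation-invariance reduction is the same as the paper's, but you evaluate the spherical integral $\int_{\Sd}|e_1-\rho\theta|^{-2d}\,d\theta$ by a different route.

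The paper writes this integral as $\om_{d-1}\,{}_2\mathrm{F}_1\bigl(d,\tfrac{d+2}{2};\tfrac{d}{2};\rho^2\bigr)$ and reads off the closed form $\frac{1+\rho^2}{(1-\rho^2)^{d+1}}$ from tables of special functions. You instead note that at the critical exponent $2d$ the kernel is a squared Poisson kernel, $|\theta-\rho e_1|^{-2d}=(1-\rho^2)^{-2}P(\rho e_1,\theta)^2$. The closed form then follows from the reproducing identity $\frac{1}{\om_{d-1}}\int_{\Sd}P(\xi,\eta)P(\zeta,\eta)\,d\eta=\frac{1-|\xi|^2|\zeta|^2}{(1-2\xi\cdot\zeta+|\xi|^2|\zeta|^2)^{d/2}}$ evaluated at $\xi=\zeta=\rho e_1$.

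Your route is self-contained, uses no hypergeometric identities, and explains structurally where the closed form comes from when $sp=d$. The paper's route is shorter given the cited formulas, and the hypergeometric representation covers any exponent $d+sp$, whereas your argument is tailored to the exponent $2d$.

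One justification should be tightened. The identity $P(\xi,\zeta)=P(|\zeta|\xi,\zeta/|\zeta|)$ for $|\zeta|=1$ only matches boundary values. It does not by itself show that $H(\zeta)=\frac{1-|\xi|^2|\zeta|^2}{(1-2\xi\cdot\zeta+|\xi|^2|\zeta|^2)^{d/2}}$ is harmonic. Write instead $H(\zeta)=P\bigl(|\xi|\zeta,\xi/|\xi|\bigr)$ for $\xi\neq 0$. This is the Poisson kernel with pole $\xi/|\xi|$ composed with the dilation $\zeta\mapsto|\xi|\zeta$, hence harmonic on $\{|\zeta|<1/|\xi|\}$, a neighbourhood of the closed unit ball. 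The Poisson representation then yields the identity; the case $\xi=0$ is trivial.

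Minor slips:
\begin{itemize}
\item The boundary datum is $P(\xi,\cdot)$, not $P(\xi,\cdot)/\om_{d-1}$.
\item It is the integral, not the mean, of $P(\xi,\cdot)$ over $\Sd$ that equals $\om_{d-1}$.
\item The denominator in your displayed reproducing formula should read $(1-2\xi\cdot\zeta+|\xi|^2|\zeta|^2)^{d/2}$.
\item The opening remarks on $\Delta|z|^{-\alpha}$ and the factorisation into the powers $d-2$ and $d+2$ are never used and can be deleted.
\end{itemize}
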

\begin{proof}
Using polar coordinates, we have
\begin{align*}
 \int_{\R^d}\int_{\R^d}\frac{|u(x)-u(y)|^p}{|x-y|^{2d}}\,dy\,dx&=\int_{0}^{\infty}\int_{0}^{\infty}\int_{\Sd}\int_{\Sd}\frac{|v(r)-v(t)|^p}{|r\ep-t\si|^{2d}}(rt)^{d-1}d\ep\,d\si\,dr\,dt.   
\end{align*}
Assume $t<r$. By rotation invariance and \cite[(3.665.2)]{MR3307944}, \cite[(15.3.3)]{MR1225604},
\begin{align*}
\int_{\Sd}\int_{\Sd}\frac{d\ep\,d\si}{|r\ep-t\si|^{2d}}&=\om_{d-1}\int_{\Sd}\frac{d\ep}{\left(r^2-2rt\ep_d+t^2\right)^{d}}=\om_{d-1} r^{-2d}\int_{\Sd}\frac{d\ep}{\left(1-\tfrac{2t}{r}\ep_d+\left(\tfrac{t}{r}\right)^2\right)^{d}}\\
&=\om_{d-1}^2 r^{-2d}\,_{2}\text{F}_1\left(d,\tfrac{d+2}{2},\tfrac{d}{2};\left(\tfrac{t}{r}\right)^2\right)=\om_{d-1}^2r^{-2d}\frac{1+\left(\tfrac{t}{r}\right)^2}{\left|1-\left(\tfrac{t}{r}\right)^2\right|^{d+1}}\\
&=\om_{d-1}^2\frac{r^2+t^2}{\left|r^2-t^2\right|^{d+1}},
\end{align*}
where $_{2}\text{F}_1$ is the Gauss hypergeometric function. This completes the proof of the lemma.
\end{proof}

The following lemma establishes a one-dimensional Hardy inequality with a logarithmic weight, which is the key ingredient in proving Theorem \ref{Thm: logarithmic frac Hardy inequality}.

\begin{lem}\label{key lemma}
Let $p>1$, $R>0$, and $0<y<1$. Suppose that $\operatorname{supp} f\subset (0,Ry]$. Then
\begin{align*}
\int_{0}^{\infty}\frac{|f(x)-f(xy)|^p}{x}\,dx\ge\left(\frac{p-1}{p}\right)^p\left(\log\frac{1}{y}\right)^p\int_{0}^{R}\frac{|f(x)|^p}{x\left(\log\frac{R}{x}\right)^p}\,dx.    
\end{align*}
\end{lem}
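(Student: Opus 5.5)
The plan is to pass to logarithmic coordinates, where the dilation $x\mapsto xy$ becomes a translation. Then we cut the half-line into blocks of length $\log\frac1y$ and apply the classical discrete Hardy inequality on each residue class.

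\emph{Step 1 (change of variables).} Set $L:=\log\frac1y>0$ and $g(s):=f(Re^{-s})$ for $s\in\R$, so that $x=Re^{-s}$ and $dx/x=ds$. Then $f(xy)=g(s+L)$, and the left-hand side equals $\int_{\R}|g(s)-g(s+L)|^p\,ds$. On the right-hand side $\log\frac Rx=s$, so the right-hand side becomes $\left(\frac{p-1}{p}\right)^pL^p\int_0^\infty |g(s)|^p s^{-p}\,ds$. The support hypothesis $\operatorname{supp}f\subset(0,Ry]$ translates into $g(s)=0$ for $s<L$. Since $f$ vanishes near $0$ (for $f\in C_c$), $g$ also vanishes for large $s$; in general one can truncate and pass to the limit.

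\emph{Step 2 (slicing into arithmetic progressions).} For $\sigma\in[0,L)$ and $n\ge0$, put $a_n(\sigma):=g(\sigma+nL)$ and $b_n(\sigma):=a_n(\sigma)-a_{n+1}(\sigma)=g(\sigma+nL)-g(\sigma+(n+1)L)$. The key point is that $a_0(\sigma)=g(\sigma)=0$ because $\sigma<L$. Hence, telescoping from the \emph{left}, we get $a_n=-\sum_{k=0}^{n-1}b_k$ for $n\ge1$. Moreover, for $s=\sigma+nL$ with $n\ge1$ we have $s\ge nL$, so
\begin{equation*}
\int_0^\infty\frac{|g(s)|^p}{s^p}\,ds=\int_0^L\sum_{n\ge1}\frac{|a_n(\sigma)|^p}{(\sigma+nL)^p}\,d\sigma\le L^{-p}\int_0^L\sum_{n\ge1}\frac{1}{n^p}\Bigl|\sum_{k=0}^{n-1}b_k(\sigma)\Bigr|^p d\sigma .
\end{equation*}
The interval $[0,L)$ contributes nothing on the left-hand side, since $g=0$ there.

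\emph{Step 3 (discrete Hardy).} The classical discrete Hardy inequality gives $\sum_{n\ge1}n^{-p}\bigl|\sum_{k=1}^{n}c_k\bigr|^p\le\left(\frac{p}{p-1}\right)^p\sum_{k\ge1}|c_k|^p$. We apply it with $c_k=b_{k-1}(\sigma)$. This bounds the inner sum by $\left(\frac{p}{p-1}\right)^p\sum_{k\ge0}|b_k(\sigma)|^p$. Integrating over $\sigma\in[0,L)$ then yields
\begin{equation*}
\int_0^L\sum_{k\ge0}|b_k(\sigma)|^p\,d\sigma=\int_0^\infty|g(s)-g(s+L)|^p\,ds\le\int_{\R}|g(s)-g(s+L)|^p\,ds .
\end{equation*}
Combining this with Step 2 gives exactly $\left(\frac{p-1}{p}\right)^pL^p\int_0^\infty|g|^ps^{-p}\,ds\le\int_{\R}|g(s)-g(s+L)|^p\,ds$, which is the claim.

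The only delicate point is the bookkeeping in Step 2: one must notice that the support condition forces $a_0=0$. This is what lets us write $a_n$ as a forward partial sum and use the Hardy inequality with the sharp constant $(p/(p-1))^p$, rather than the dual (Copson-type) inequality, which has a worse constant. One should also check that the replacement $(\sigma+nL)^{-p}\le(nL)^{-p}$ loses no generality, since it only makes the right-hand side larger.
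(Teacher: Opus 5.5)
Your proof is correct and follows the paper's argument essentially step for step. Both use the substitution $x=Re^{-s}$, slice into residue classes modulo $\log\frac{1}{y}$, note that the support condition forces the zeroth term to vanish, apply the classical discrete Hardy inequality, and use the bound $(\sigma+nL)^{-p}\le (nL)^{-p}$. The truncation remark in Step 1 is not needed, since the discrete Hardy inequality holds for arbitrary sequences.
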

\begin{proof}
Let $a=\log\tfrac{1}{y}$ and $h(x)=f(Re^{-x})$. Substituting $x\mapsto Re^{-t}$, we obtain
\begin{align*}
\int_{0}^{\infty}\frac{|f(x)-f(xy)|^p}{x}\,dx&=\int_{0}^{R}\frac{|f(x)-f(xy)|^p}{x}\,dx=\int_{0}^{\infty}\left|f\left(Re^{-t}\right)-f\left(Re^{-(t+a)}\right)\right|^p\,dt\\
&=\int_{0}^{\infty}|h(t+a)-h(t)|^p\,dt=\sum_{k=1}^{\infty}\int_{(k-1)a}^{ka}|h(t+a)-h(t)|^p\,dt\\
&=\int_{0}^{a}\sum_{k=1}^{\infty}|h(t+ka)-h(t+(k-1)a)|^p\,dt.
\end{align*}
Let $u_k(t)=h(t+ka)$. By assumption, we have $u_0(t)=h(t)=f(Re^{-t})=0$ for $0<t<a$. Hence, by the classical discrete Hardy inequality (see \cite{MR1544414}),
\begin{align*}
 \int_{0}^{a}\sum_{k=1}^{\infty}&|h(t+ka)-h(t+(k-1)a)|^p\,dt\\
 &=\int_{0}^{a}\sum_{k=1}^{\infty}|u_{k}(t)-u_{k-1}(t))|^p\,dt\ge\left(\frac{p-1}{p}\right)^p\int_{0}^{a}\sum_{k=1}^{\infty}\frac{|u_{k}(t)|^p}{k^p}\,dt\\
 &=\left(\frac{p-1}{p}\right)^p\int_{0}^{a}\sum_{k=1}^{\infty}\frac{|h(t+ka)|^p}{k^p}\,dt\ge\left(\frac{p-1}{p}\right)^p a^p\int_{0}^{a}\sum_{k=1}^{\infty}\frac{|h(t+ka)|^p}{(t+ka)^p}\,dt\\
 &=\left(\frac{p-1}{p}\right)^p a^p\int_{0}^{\infty}\frac{|h(t)|^p}{t^p}\,dt=\left(\frac{p-1}{p}\right)^p a^p\int_{0}^{R}\frac{|f(x)|^p}{x\left(\log\frac{R}{x}\right)^p}\,dx.
\end{align*}
The proof is complete.
\end{proof}

The following corollary is obtained by applying Lemma \ref{key lemma} to a function $f$ supported in $(0,R)$, with $R'=R/y$. It provides a logarithmic Hardy estimate that will be used in the proof of Theorem \ref{Thm: logarithmic frac Hardy inequality}.

\begin{cor}\label{corr}
 Let $p>1$, $R>0$, $0<y<1$. Suppose that $\operatorname{supp} f\subset (0,R]$. Then
\begin{equation}\label{key inequality}
\int_{0}^{\infty}\frac{|f(x)-f(xy)|^p}{x}\,dx\ge\left(\frac{p-1}{p}\right)^p\left(\log\frac{1}{y}\right)^p\int_{0}^{R}\frac{|f(x)|^p}{x\left(\log\frac{R}{xy}\right)^p}\,dx.    
\end{equation}   
\end{cor}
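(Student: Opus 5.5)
The plan is to apply Lemma \ref{key lemma} with the enlarged radius $R':=R/y$ in place of $R$. The hypothesis of Lemma \ref{key lemma} with radius $R'$ reads $\operatorname{supp} f\subset(0,R'y]=(0,R]$, which is exactly our assumption. So Lemma \ref{key lemma} applies directly and gives
\begin{equation*}
\int_{0}^{\infty}\frac{|f(x)-f(xy)|^p}{x}\,dx\ge\left(\frac{p-1}{p}\right)^p\left(\log\frac{1}{y}\right)^p\int_{0}^{R/y}\frac{|f(x)|^p}{x\left(\log\frac{R}{xy}\right)^p}\,dx,
\end{equation*}
because $\log\frac{R'}{x}=\log\frac{R}{xy}$.

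It then remains to reduce the range of integration on the right-hand side. Since $f$ vanishes on $(R,R/y)$, the integral over $(0,R/y)$ equals the integral over $(0,R)$. This is exactly \eqref{key inequality}.

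The only point needing care is that the weight is well defined. For $0<x\le R$ we have $\frac{R}{xy}\ge\frac1y>1$, so the logarithm is bounded below by $\log\frac1y>0$ and causes no singularity. I do not expect any genuine obstacle here: the argument is a rescaling of the support radius.
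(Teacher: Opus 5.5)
Your proposal is correct and follows exactly the paper's argument: apply Lemma \ref{key lemma} with radius $R'=R/y$, use $\log\frac{R'}{x}=\log\frac{R}{xy}$, and drop the part of the integral over $(R,R/y)$, where $f$ vanishes. Your remark that $\log\frac{R}{xy}\ge\log\frac{1}{y}>0$ on $(0,R]$ is a harmless extra check.
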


The following lemma shows that replacing a function with a suitable radial function does not increase its fractional energy, while preserving its weighted integral. This reduction allows us to assume radial symmetry in the proof of Theorem \ref{Thm: logarithmic frac Hardy inequality}. This fact is essentially well known, however, we include its proof for clarity of the work.

\begin{lem}\label{radial lemma}
 Let $p,d\ge 1$, $0<s<1$ and $u\in L^p_{loc}(\R^d)$. Define
 $$
 U(r)=\left(\frac{1}{\om_{d-1}}\int_{\Sd}|u(r\om)|^{p}\,d\om\right)^{\frac{1}{p}},\quad \text{for } r>0
 $$
 and $\widetilde{u}(x)=U(|x|)$. Then\\
 \begin{align*}
(i)&\int_{\R^d}\int_{\R^d}\frac{|u(x)-u(y)|^p}{|x-y|^{d+sp}}\,dy\,dx\ge\int_{\R^d}\int_{\R^d}\frac{|\widetilde{u}(x)-\widetilde{u}(y)|^p}{|x-y|^{d+sp}}\,dy\,dx;\\
(ii)&\int_{\R^d}|u(x)|^pV(x)\,dx=\int_{\R^d}|\widetilde{u}(x)|^pV(x)\,dx
 \end{align*}
 for any radial and nonnegative function $V$.
 \end{lem}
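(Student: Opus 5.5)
Part (ii) is immediate from polar coordinates. Since $V$ is radial, write $V(x)=V_0(|x|)$. Then
$$\int_{\R^d}|u|^pV\,dx=\int_0^\infty V_0(r)r^{d-1}\int_{\Sd}|u(r\om)|^p\,d\om\,dr=\int_0^\infty V_0(r)r^{d-1}\om_{d-1}U(r)^p\,dr,$$
and the right-hand side is exactly $\int_{\R^d}|\widetilde u|^pV\,dx$. Tonelli applies because every integrand is nonnegative, so no integrability assumptions are needed.

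For part (i), I also pass to polar coordinates, writing $x=r\ep$ and $y=t\si$:
$$\int_{\R^d}\int_{\R^d}\frac{|u(x)-u(y)|^p}{|x-y|^{d+sp}}\,dy\,dx=\int_0^\infty\int_0^\infty (rt)^{d-1}\int_{\Sd}\int_{\Sd}\frac{|u(r\ep)-u(t\si)|^p}{|r\ep-t\si|^{d+sp}}\,d\ep\,d\si\,dr\,dt.$$
Fix $r,t$ and set $K(\ep,\si)=|r\ep-t\si|^{-d-sp}$. This kernel depends only on $\ep\cdot\si$, so $\int_{\Sd}K(\ep,\si)\,d\si=:k(r,t)$ does not depend on $\ep$. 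It is finite whenever $r\neq t$, and the diagonal $r=t$ has measure zero in $(r,t)$. The goal is the pointwise-in-$(r,t)$ inequality
$$\int_{\Sd}\int_{\Sd}|u(r\ep)-u(t\si)|^pK(\ep,\si)\,d\ep\,d\si\ \ge\ |U(r)-U(t)|^p\int_{\Sd}\int_{\Sd}K\,d\ep\,d\si .$$
The right-hand side is precisely the spherical integral of $\widetilde u$, because $\widetilde u$ is constant on spheres. Integrating this inequality against $(rt)^{d-1}\,dr\,dt$ then gives (i).

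To prove the pointwise inequality, I introduce the probability measure $d\mu=K\,d\ep\,d\si/\om_{d-1}k$ on $\Sd\times\Sd$. Its two marginals are both the normalised surface measure, again by the rotation invariance of $K$. Consider the functions $F(\ep,\si)=|u(r\ep)|$ and $G(\ep,\si)=|u(t\si)|$. The quantity $\|F-G\|_{L^p(\mu)}$ is at least $\bigl|\|F\|_{L^p(\mu)}-\|G\|_{L^p(\mu)}\bigr|$ by Minkowski's reverse triangle inequality. Because the marginals are uniform, $\|F\|_{L^p(\mu)}=U(r)$ and $\|G\|_{L^p(\mu)}=U(t)$. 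Finally, $|u(r\ep)-u(t\si)|\ge\bigl||u(r\ep)|-|u(t\si)|\bigr|$ pointwise. Raising to the power $p$ and multiplying by $\om_{d-1}k$ yields the claimed inequality.

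The main point requiring care is the observation that the marginals of $\mu$ are uniform. This uses that $\si\mapsto K(\ep,\si)$ has total mass independent of $\ep$ (and symmetrically in $\ep$), which follows from the fact that rotations act transitively on $\Sd$. The only other caveat concerns infinite values. If $U(r)=\infty$ on a set of positive measure, or the left-hand side of (i) is infinite, the inequality is trivial. Otherwise the Minkowski step is valid for finite norms, and on the null set $r=t$ nothing needs to be checked. The hypothesis $u\in L^p_{loc}$ guarantees that $U(r)<\infty$ for a.e. $r$.
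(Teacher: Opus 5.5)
Your proof is correct and follows essentially the same route as the paper. The paper also passes to polar coordinates and applies the reverse Minkowski inequality in $L^p(\mathbb{S}^{d-1}\times\mathbb{S}^{d-1})$ with weight $|r\eta-t\sigma|^{-d-sp}$, using that $\int_{\mathbb{S}^{d-1}}|r\eta-t\sigma|^{-d-sp}\,d\sigma$ does not depend on $\eta$ to identify the two norms with $U(r)$ and $U(t)$; normalising to a probability measure $\mu$ is only a cosmetic difference.
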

 \begin{proof}
  For $\om\in\Sd$, we define 
  $$
  \Phi_{d,s,p}(r,t):=\int_{\Sd}\frac{d\si}{|r\om-t\si|^{d+sp}}=\int_{\Sd}\frac{d\si}{\left(r^2-2rt\si_d+t^2\right)^\frac{d+sp}{2}}.
  $$
  Using polar coordinates and reverse triangle inequality, we have
  \begin{align*}
   &\int_{\R^d}\int_{\R^d}\frac{|u(x)-u(y)|^p}{|x-y|^{d+sp}}\,dy\,dx\\
   &=\int_{0}^{\infty}\int_{0}^{\infty}\int_{\Sd}\int_{\Sd}\frac{|u(r\om)-u(t\si)|^p}{|r\om-t\si|^{d+sp}}(rt)^{d-1}\,d\om\,d\si\,dr\,dt\\
   &\ge\int_{0}^{\infty}\int_{0}^{\infty}(rt)^{d-1}\,dr\,dt\\
   &\times\left|\left(\int_{\Sd}\int_{\Sd}\frac{|u(r\om)|^p}{|r\om-t\si|^{d+sp}}\,d\om\,d\si\right)^{\frac{1}{p}}-\left(\int_{\Sd}\int_{\Sd}\frac{|u(t\si)|^p}{|r\om-t\si|^{d+sp}}\,d\om\,d\si\right)^{\frac{1}{p}}\right|^p\\
   &=\om_{d-1}\int_{0}^{\infty}\int_{0}^{\infty}|U(r)-U(t)|^p\Phi_{d,s,p}(r,t)(rt)^{d-1}\,dr\,dt\\
   &=\int_{0}^{\infty}\int_{0}^{\infty}\int_{\Sd}\int_{\Sd}\frac{|U(r)-U(t)|^p}{|r\om-t\si|^{d+sp}}(rt)^{d-1}\,d\om\,d\si\,dr\,dt\\
   &=\int_{\R^d}\int_{\R^d}\frac{|\widetilde{u}(x)-\widetilde{u}(y)|^p}{|x-y|^{d+sp}}\,dy\,dx
  \end{align*}
  and the proof of part (i) is complete. To proof (ii), we calculate directly
  \begin{align*}
  \int_{\R^d}|u(x)|^p V(x)\,dx&=\int_{0}^{\infty}\int_{\Sd}|u(r\om)|^pV(r)r^{d-1}\,d\om\,dr=\om_{d-1}\int_{0}^{\infty}|U(r)|^pV(r)r^{d-1}\,dr\\
  &=\int_{\R^d}|\widetilde{u}(x)|^p V(x)\,dx.
  \end{align*}
 \end{proof}

 We are now ready to prove Theorem \ref{Thm: logarithmic frac Hardy inequality}. 

\begin{proof}[\textbf{Proof of Theorem \ref{Thm: logarithmic frac Hardy inequality}}]
By Lemma \ref{radial lemma}, we may, without loss of generality, assume that $u$ is radial. Applying Lemma \ref{lem: critical Gagliardo seminorm}, we have
\begin{align*}
\int_{\R^d}\int_{\R^d}\frac{|u(x)-u(y)|^p}{|x-y|^{2d}}\,dy\,dx&=\om_{d-1}^2\int_{0}^{\infty}\int_{0}^{\infty}\frac{|u(r)-u(t)|^p}{|r^2-t^2|^{d+1}}(rt)^{d-1}(r^2+t^2)\,dr\,dt\\
&=2\om_{d-1}^2\int_{0}^{\infty}\int_{0}^{r}\frac{|u(r)-u(t)|^p}{|r^2-t^2|^{d+1}}(rt)^{d-1}(r^2+t^2)\,dt\,dr\\
&=2\om_{d-1}^2\int_{0}^{1}\frac{t^{d-1}(1+t^2)}{(1-t^2)^{d+1}}dt\int_{0}^{\infty}\frac{|u(r)-u(rt)|^p}{r}\,dr.
\end{align*}
By Corollary \ref{corr},
\begin{align*}
\int_{0}^{\infty}\frac{|u(r)-u(rt)|^p}{r}\,dr&\ge\left(\frac{p-1}{p}\right)^p\left(\log\frac{1}{t}\right)^p\int_{0}^{R}\frac{|u(r)|^p}{r\left(\log\frac{R}{tr}\right)^p}\,dr\\
&=\frac{\left(\frac{p-1}{p}\right)^p\left(\log\frac{1}{t}\right)^p}{\om_{d-1}}\int_{B_R}\frac{|u(x)|^p}{|x|^d\left(\log\frac{R}{t|x|}\right)^p}\,dx.
\end{align*}
Combining the above estimates, we obtain the desired result.

We will now justify sharpness of the constant. To this end, we denote $\ga=\tfrac{p-1}{p}$, $c_p=\ga^p$ and 
$$
C_{p,d}=\int_{0}^{1}\frac{t^{d-1}\left(1+t^2\right)\left(\log\frac{1}{t}\right)^p}{\left(1-t^2\right)^{d+1}}\,dt.
$$
We define test functions
$$
u_N(r)=\begin{cases}
N^{\ga}-1,&\quad 0\le r\le R e^{-N},\\
\left(\log\frac{R}{r}\right)^{\ga}-1,&\quad R e^{-N}\le r\le \tfrac{R}{e},\\
0,&\quad r>\tfrac{R}{e},
\end{cases}
$$
or simply $u_N(r)=\left(\min\left\{\log\frac{R}{r},N\right\}^{\ga}-1\right)_+$, and let $u_N(x):=u_N(|x|)$. We have $\text{supp }u_N=\overline{B_{R/e}}\subset B_R$ and $0\le u_N\le N^{\ga}-1$.

We first observe that, by Hölder's inequality, for $r>0$, $0<t<1$, and an absolutely continuous function $u$,
\begin{align*}
|u(r)-u(rt)|^p&=\left|\int_{rt}^{r}u'(\xi)\,d\xi\right|^p\le\int_{rt}^{r}|u'(\xi)|^p\xi^{p-1}\,d\xi\left(\int_{rt}^{r}\frac{d\xi}{\xi}\right)^{p-1}\\
&=\left(\log\frac{1}{t}\right)^{p-1}\int_{rt}^{r}|u'(\xi)|^p\xi^{p-1}\,d\xi.
\end{align*}
Hence, by Fubini's theorem,
\begin{align*}
\int_{0}^{\infty}\frac{|u(r)-u(rt)|^p}{r}\,dr&\le \left(\log\frac{1}{t}\right)^{p-1}\int_{0}^{\infty}\frac{1}{r}\int_{rt}^{r}|u'(\xi)|^p\xi^{p-1}\,d\xi\,dr\\
&=\left(\log\frac{1}{t}\right)^{p}\int_{0}^{\infty}|u'(\xi)|^p\xi^{p-1}\,d\xi.
\end{align*}
Using the above estimates and Lemma \ref{lem: critical Gagliardo seminorm}, we have 
\begin{align*}
\int_{\R^d}\int_{\R^d}\frac{|u_N(x)-u_N(y)|^p}{|x-y|^{2d}}\,dy\,dx&=2\om_{d-1}^2\int_{0}^{1}\frac{t^{d-1}(1+t^2)}{(1-t^2)^{d+1}}\,dt\int_{0}^{\infty}\frac{|u_N(r)-u_N(rt)|^p}{r}\,dr\\
&\le2\om_{d-1}^2 C_{p,d}\int_{0}^{\infty}|u_N'(r)|^pr^{p-1}\,dr\\
&=2c_p\,\om_{d-1}^2 C_{p,d} \log N.
\end{align*}
We will now estimate the right-hand side. For any $1<M<N$, we have
\begin{align*}
 \int_{B_R}|u_N(x)|^p W_{p,d,R}(x)\,dx&\ge\om_{d-1}\int_{Re^{-N}}^{Re^{-M}}|u_{N}(r)|^p W_{p,d,R}(r)r^{d-1}\,dr\\
 &=2\om_{d-1}^2\int_{0}^{1}\frac{t^{d-1}(1+t^2)\left(\log\frac{1}{t}\right)^p}{(1-t^2)^{d+1}}\,dt\int_{Re^{-N}}^{Re^{-M}}\frac{\left(\left(\log\frac{R}{r}\right)^\ga-1\right)^p}{r\left(\log\frac{R}{r}+\log\frac{1}{t}\right)^p}\,dr.
\end{align*}
When $Re^{-N}\le r\le Re^{-M}$, we have $M\le\log\frac{R}{r}\le N$, so 
$$
\frac{\left(\left(\log\frac{R}{r}\right)^\ga-1\right)^p}{\left(\log\frac{R}{r}+\log\frac{1}{t}\right)^p}=\frac{1}{\log \frac{R}{r}}\frac{\left(1- \left(\log\frac{R}{r}\right)^{-\ga}\right)^p}{\left( 1+\frac{\log\frac{1}{t}}{\log\frac{R}{r}} \right)^{p}}\ge\frac{(1-M^{-\ga})^p}{\log\frac{R}{r}\left(1+\frac{\log\frac{1}{t}}{M}\right)^p}.
$$
Consequently,
\begin{align*}
& \int_{B_R}|u_N(x)|^p W_{p,d,R}(x)\,dx \\ & \qquad \ge2\om_{d-1}^2\left(1-M^{-\ga}\right)^p\int_{0}^{1}\frac{t^{d-1}(1+t^2)\left(\log\frac{1}{t}\right)^p}{(1-t^2)^{d+1}\left(1+\frac{\log\frac{1}{t}}{M}\right)^p}\,dt\int_{Re^{-N}}^{Re^{-M}}\frac{dr}{r\log\frac{R}{r}}\\
 & \quad =2\om_{d-1}^2\left(1-M^{-\ga}\right)^p\log\frac{N}{M}\int_{0}^{1}\frac{t^{d-1}(1+t^2)\left(\log\frac{1}{t}\right)^p}{(1-t^2)^{d+1}\left(1+\frac{\log\frac{1}{t}}{M}\right)^p}\,dt.
\end{align*}
Hence, the sharp constant $\Lambda$ in the inequality
$$
\int_{\R^d}\int_{\R^d}\frac{|u(x)-u(y)|^p}{|x-y|^{2d}}\,dy\,dx\ge\Lambda\int_{B_R}|u(x)|^pW_{p,d,R}(x)\,dx
$$
obeys
\begin{align*}
\Lambda&\le\frac{\displaystyle\int_{\R^d}\int_{\R^d}\frac{|u_N(x)-u_N(y)|^p}{|x-y|^{2d}}\,dy\,dx}{\displaystyle\int_{B_R}|u_N(x)|^pW_{p,d,R}(x)\,dx}\le\frac{c_pC_{p,d}\left(1-M^{-\ga}\right)^{-p}\log N}{\displaystyle\log\frac{N}{M}\int_{0}^{1}\frac{t^{d-1}(1+t^2)\left(\log\frac{1}{t}\right)^p}{(1-t^2)^{d+1}\left(1+\frac{\log\frac{1}{t}}{M}\right)^p}\,dt}.
\end{align*}
Letting now $N\rightarrow\infty$ and then $M\rightarrow\infty$, we obtain that $\Lambda\le c_p$. Together with the lower bound, it implies that $\Lambda=c_p$ and the proof is finished.
\end{proof}

We now prove Theorem \ref{Thm: Failure of logarithmic Hardy inequality} by constructing a suitable family of test functions and showing that the corresponding quotient tends to zero, thereby establishing the non-existence of a positive constant in the proposed inequality. In the proof below and elsewhere, we denote by 
$$
[u]_{W^{s,p}(\Om)}:=\left(\int_{\Om}\int_{\Om}\frac{|u(x)-u(y)|^p}{|x-y|^{d+sp}}\,dy\,dx\right)^{\frac{1}{p}
}
$$
the Gagliardo seminorm of $u$.

\begin{proof}[\textbf{Proof of Theorem \ref{Thm: Failure of logarithmic Hardy inequality}}]
Let $\va\in C_c^{1}\left(\left(1,2\right)\right)$ be a nonzero function satisfying $0\le\va\le 1$, and let $0<\ve<1$. We define $u_{\ve}(x)=\va\left(\frac{1}{\ve}\log\frac{R}{|x|}\right)$. It holds $\text{supp }u_{\ve}\subset\{R e^{-2\ve}\le|x|\le R e^{-\ve}\}\subset B_R$. Using polar integration and then substituting $r\mapsto R e^{-t}$, we have
\begin{align*}
\int_{B_R}\frac{|u_{\ve}(x)|^p}{|x|^{d}\left(\log\frac{R}{|x|}\right)^p}\,dx&=\om_{d-1}\int_{0}^{R}\frac{|\va\left(\frac{1}{\ve}\log\frac{R}{r}\right)|^p}{r\left(\log\frac{R}{r}\right)^p}dr=\om_{d-1}\int_{0}^{\infty}\frac{|\va\left(\tfrac{t}{\ve}\right)|^p}{t^p}\,dt\\
&=\om_{d-1}\ve^{1-p}\int_{1}^{2}\frac{|\va(t)|^p}{t^p}\,dt\approx \ve^{1-p}.
\end{align*}
Moreover, by Lemma \ref{lem: critical Gagliardo seminorm},
\begin{align*}
\int_{\R^d}\int_{\R^d}\frac{|u_{\ve}(x)-u_{\ve}(y)|^p}{|x-y|^{2d}}\,dy\,dx&=\om_{d-1}^2\int_{0}^{\infty}\int_{0}^{\infty}\frac{|u_{\ve}(r)-u_{\ve}(t)|^p}{|r^2-t^2|^{d+1}}(rt)^{d-1}(r^2+t^2)\,dr\,dt\\
&=\om_{d-1}^2\int_{\R}\int_{\R}\frac{\left|\va\left(\tfrac{z}{\ve}\right)-\va\left(\tfrac{w}{\ve}\right)\right|^p}{\left|e^{-2z}-e^{-2w}\right|^{d+1}}e^{-d(z+w)}\left(e^{-2z}+e^{-2w}\right)\,dz\,dw.
\end{align*}
Observe that, since it is elementary to check that the function $x\mapsto\cosh x\left(\frac{x}{\sinh x}\right)^{d+1}$ is bounded on $\R$, for some constant $C>0$ it holds 
\begin{align*}
\frac{e^{-d(z+w)}\left(e^{-2z}+e^{-2w}\right)}{\left|e^{-2z}-e^{-2w}\right|^{d+1}}=2^{-d}\frac{\cosh(z-w)}{|\sinh(z-w)|^{d+1}}\le \frac{C}{|z-w|^{d+1}},\,z\ne w.
\end{align*}
Consequently,
\begin{align*}
\int_{\R}\int_{\R}&\frac{\left|\va\left(\tfrac{z}{\ve}\right)-\va\left(\tfrac{w}{\ve}\right)\right|^p}{\left|e^{-2z}-e^{-2w}\right|^{d+1}}e^{-d(z+w)}\left(e^{-2z}+e^{-2w}\right)\,dz\,dw\le C\int_{\R}\int_{\R}\frac{\left|\va\left(\tfrac{z}{\ve}\right)-\va\left(\tfrac{w}{\ve}\right)\right|^p}{\left|z-w\right|^{d+1}}\,dz\,dw\\
&=C\ve^{1-d}[\va]_{W^{d/p,p}(\R)}^p\approx \ve^{1-d}.
\end{align*}
Summarising, 
\begin{align*} \frac{\displaystyle\int_{\R^d}\int_{\R^d}\frac{|u_{\ve}(x)-u_{\ve}(y)|^p}{|x-y|^{2d}}\,dy\,dx}{\displaystyle\int_{B_R}\frac{|u_{\ve}(x)|^p}{|x|^{d}\left(\log\frac{R}{|x|}\right)^p}\,dx}\approx \ve^{p-d}\rightarrow 0,
\end{align*}
when $\ve\rightarrow 0^+$, since $p>d$. The proof is complete.
\end{proof}

\subsection{Asymptotics of \texorpdfstring{$W_{p,d,R}$}{Wpdr}}\label{Subsecton: Asymptotic of WpdR} In this subsection, we establish the asymptotic behaviour of the weight function $W_{p,d,R}$ as $p$ approaches the critical exponent $d$. This analysis provides a connection between the fractional Hardy inequality and its classical counterpart.

\smallskip

We begin with an auxiliary lemma. Recall that the Hurwitz zeta function is defined for $\Re(z)>1$ and $a>0$ as
$$\ze(z,a)=\sum_{n=0}^{\infty}\frac{1}{(n+a)^{z}}.$$ 
It is well known that, for any $a>0$, this function has a simple pole at $z=1$ with residue being equal to one, i.e.
\begin{equation}\label{Hurwitz residue}
\lim_{z\rightarrow 1}(z-1)\ze(z,a)=1.
\end{equation}

\begin{lem}\label{lemma: limiting case p tends to d}
    For any $\lambda \geq 0$ and $d \geq 1$, we have
    \begin{equation*}
        \lim_{p \to d^+}  (p-d)     \sum_{k=0}^{\infty} \frac{(d+k-1)!}{k!} \frac{1}{(d+2k+ \lambda)^{p}} = 2^{-d}.
    \end{equation*}
\end{lem}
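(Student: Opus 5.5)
The plan is to rewrite the sum as a finite linear combination of Hurwitz zeta functions and then apply the residue formula \eqref{Hurwitz residue} to the only term that is singular at $p=d$.

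\textbf{Step 1: the weight is a monic polynomial.} For $k\ge 0$ we have
$$\frac{(d+k-1)!}{k!}=(k+1)(k+2)\cdots(k+d-1)=:P(k).$$
This is a polynomial in $k$ of degree $d-1$ with leading coefficient $1$. For $d=1$ the product is empty and $P\equiv 1$. No factor $1/(d-1)!$ appears, which is exactly why the limit is $2^{-d}$ rather than $2^{-d}/(d-1)!$.

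\textbf{Step 2: centre the polynomial at the zeta shift.} Put $a:=\frac{d+\lambda}{2}$. Then $a>0$ because $d\ge1$ and $\lambda\ge0$, and $d+2k+\lambda=2(k+a)$. Re-expanding $P$ in powers of $k+a$ gives
$$P(k)=(k+a)^{d-1}+\sum_{j=0}^{d-2}b_j\,(k+a)^j,$$
where the coefficients $b_j$ depend only on $d$ and $\lambda$. Here we use that $P$ is monic, so the top coefficient stays $1$.

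\textbf{Step 3: split into zeta functions.} For $p>d$ every series below converges absolutely. Therefore
$$\sum_{k=0}^{\infty}\frac{P(k)}{(d+2k+\lambda)^p}=2^{-p}\Big(\ze(p-d+1,a)+\sum_{j=0}^{d-2}b_j\,\ze(p-j,a)\Big).$$

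\textbf{Step 4: pass to the limit $p\to d^+$.} Consider the lower-order terms first. For $0\le j\le d-2$ and $p\in(d,d+1]$ we have $p-j\ge p-d+2>2$. Hence $\ze(p-j,a)\le \ze(2,a)<\infty$ uniformly in $p$, and after multiplying by $(p-d)$ these terms tend to $0$.

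For the main term, set $z=p-d+1\to1^+$. Then \eqref{Hurwitz residue} gives
$$(p-d)\,\ze(p-d+1,a)=(z-1)\,\ze(z,a)\to1.$$
Since also $2^{-p}\to2^{-d}$, the whole expression tends to $2^{-d}$, as claimed.

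\textbf{Expected obstacle.} The only delicate point is Step 2. One must verify that recentring at $a$ preserves the leading coefficient $1$, which is immediate because $P$ is monic. One must also check that every lower-order term has zeta exponent $p-j$ uniformly above $1$ near $p=d$, so that it produces no further singularity. Everything else follows directly from \eqref{Hurwitz residue}.
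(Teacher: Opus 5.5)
Your proposal is correct and is essentially the paper's proof. The paper also writes $\frac{(d+k-1)!}{k!}$ as a monic polynomial (in $d+2k+\la$ rather than your $k+a$, which only shifts powers of $2$), obtains the finite Hurwitz zeta decomposition \eqref{Hurwitz}, and applies \eqref{Hurwitz residue} to the single singular term $\ze\bigl(p-d+1,\tfrac{d+\la}{2}\bigr)$.

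Your explicit treatment of the lower-order terms fills in a step the paper leaves implicit. One small caveat: the bound $\ze(p-j,a)\le\ze(2,a)$ needs $a\ge 1$, since for $a<1$ the term $a^{-s}$ grows with $s$. This does hold here, because such terms occur only when $d\ge 2$.
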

\begin{proof}

By a direct computation, 
\begin{equation}\label{Hurwitz}
\sum_{k=0}^{\infty}\frac{(d+k-1)!}{k!} \frac{1}{(d+2k+ \lambda)^{p}}=2^{1-d-p}\sum_{m=0}^{d-1}c_m2^m\ze\left(p-m,\tfrac{d+\la}{2}\right),
\end{equation}
where the coefficients $c_m=c_m(d,\la)$ satisfy
$$
\prod_{j=1}^{d-1}(x-d-\la+2j)=\sum_{m=0}^{d-1}c_mx^m.
$$
For $d=1$, we assume the product above to be equal to $1$. Since $c_{d-1}=1$, applying \eqref{Hurwitz residue} we have 
$$
\lim_{p\rightarrow d^+}(p-d)2^{1-d-p}\sum_{m=0}^{d-1}c_m2^m\ze\left(p-m,\tfrac{d+\la}{2}\right)=2^{-d}\lim_{p\rightarrow d^+}(p-d)\ze\left(p-d+1,\tfrac{d+\la}{2}\right)=2^{-d}
$$
and the proof is complete.

\end{proof}

Using the preceding lemma, we now establish the asymptotic behaviour of the weight function $W_{p,d,R}$ as $p\to d$, which will be used to derive the desired limiting formula.

\begin{lem}\label{Lemma: limit Wpdr}
    Let $p>d, \, d \geq 1$, and recall
    \begin{equation*}
        W_{p,d,R}(x)=\frac{2\om_{d-1}}{|x|^d}\int_{0}^{1}\frac{(1+t^2)t^{d-1}}{(1-t^2)^{d+1}}\frac{\left(\log\frac{1}{t}\right)^p}{\left(\log\frac{R}{t|x|}\right)^p}\,dt,\quad \text{for } x\in B_R\setminus\{0\}.
    \end{equation*}
    Then
    \begin{equation*}
          \lim_{p \to d^+} (p-d) W_{p,d,R}(x)  = \frac{2^{-d+1} \om_{d-1}}{ |x|^{d}\left( \log \frac{R}{|x|} \right)^{d}} .
    \end{equation*}
\end{lem}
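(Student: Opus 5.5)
Fix $x$ with $0<|x|<R$ and write $L=\log\frac{R}{|x|}>0$, $s=\log\frac1t\in(0,\infty)$. Substituting $t=e^{-s}$ turns the integral in $W_{p,d,R}$ into
\begin{equation*}
\int_0^\infty \frac{(1+e^{-2s})e^{-ds}}{(1-e^{-2s})^{d+1}}\left(\frac{s}{s+L}\right)^p ds .
\end{equation*}
The factor $\left(\frac{s}{s+L}\right)^p$ is at most $1$ and tends to $1$ as $s\to\infty$. Near $s=0$ the kernel behaves like $2(2s)^{-d-1}$, while $(s/(s+L))^p\sim s^pL^{-p}$, and $p>d$ makes the integrand integrable there. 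The divergence as $p\to d^+$ therefore comes only from $s\to 0$, i.e. from $t\to 1$. The first observation is that for $p=d$ the integrand near $s=0$ behaves like $2^{-d}s^{-1}L^{-d}$, which gives a logarithmic divergence. Multiplying by $(p-d)$ should then isolate the coefficient $2^{-d}L^{-d}$, since $(p-d)\int_0^1 s^{p-d-1}\,ds=1$.

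To make this rigorous I follow the route suggested by Lemma \ref{lemma: limiting case p tends to d}. I expand
\begin{equation*}
\frac{(1+t^2)t^{d-1}}{(1-t^2)^{d+1}}=\sum_{k\ge0}\frac{(d+k-1)!}{k!}\,\bigl(t^{2k+d-1}+t^{2k+d+1}\bigr)\frac{1}{(d-1)!}\cdot(\text{correction}).
\end{equation*}
This uses $(1-t^2)^{-d}=\sum_k\binom{d+k-1}{k}t^{2k}$ together with $(1+t^2)/(1-t^2)=1+2t^2/(1-t^2)$. The expansion is probably cleaner after noting that $\frac{1+t^2}{(1-t^2)^{d+1}}t^{d-1}=\frac{1}{2d}\frac{d}{dt}\bigl(t^d(1-t^2)^{-d}\bigr)$, so that the kernel equals $\frac{1}{2(d-1)!}\sum_k\frac{(d+k-1)!}{k!}(d+2k)t^{d+2k-1}$. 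The weight factor $(s/(s+L))^p$ is harder to handle termwise. My plan is to split it as $(s/L)^p\cdot\bigl(L/(s+L)\bigr)^p$. The second factor is bounded by $1$, is continuous, and equals $1+O(s)$ near $0$.

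Accordingly I split the $s$-integral at a small $\delta>0$. On $[\delta,\infty)$ the integrand is bounded uniformly in $p\in(d,d+1]$ by an integrable function: the kernel decays like $e^{-ds}$ and is bounded away from $0$. Hence this part stays bounded, and multiplied by $(p-d)$ it tends to $0$. On $(0,\delta)$ the kernel equals $2^{-d}s^{-d-1}(1+O(s))$ and $(L/(s+L))^p=1+O(s)$ uniformly in $p$. The main term is therefore
\begin{equation*}
2^{-d}L^{-p}\int_0^\delta s^{p-d-1}\,ds=2^{-d}L^{-p}\frac{\delta^{p-d}}{p-d}.
\end{equation*}
The error term is bounded by $C\int_0^\delta s^{p-d}\,ds$, which stays bounded. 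Multiplying by $(p-d)$ and letting $p\to d^+$ gives the limit $2^{-d}L^{-d}$ for the integral. Including the prefactor $2\omega_{d-1}|x|^{-d}$ then yields $\frac{2^{1-d}\omega_{d-1}}{|x|^d L^d}$, as claimed.

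The main obstacle is the uniformity of the $O(s)$ bounds in $p$ near $d$, and I expect it to be routine. The only care needed is checking the kernel asymptotics $\frac{(1+e^{-2s})e^{-ds}}{(1-e^{-2s})^{d+1}}=2^{-d}s^{-d-1}(1+O(s))$, which follows from $1-e^{-2s}=2s(1-s+O(s^2))$. The alternative is the series route through Lemma \ref{lemma: limiting case p tends to d}, where $\lambda$ would encode the extra $(L/(s+L))^p$ factor after expanding it. I would use that route only if the direct splitting runs into trouble.
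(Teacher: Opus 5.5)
Your proof is correct, but it takes a different and more elementary route than the paper.

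\textbf{Your route.} After substituting $t=e^{-s}$ you analyse the singularity at $s=0$ directly.
On $[\delta,\infty)$ the kernel $K(s)=2^{-d}\cosh s/(\sinh s)^{d+1}$ is integrable and $(s/(s+L))^p\le 1$. So that piece is bounded uniformly in $p\in(d,d+1]$ and is killed by the factor $(p-d)$.
On $(0,\delta)$ you combine $K(s)=2^{-d}s^{-d-1}(1+O(s))$ with $1-(d+1)s/L\le(1+s/L)^{-p}\le 1$. This gives
$(p-d)\int_0^\delta K(s)\bigl(\tfrac{s}{s+L}\bigr)^p\,ds=2^{-d}L^{-p}\delta^{p-d}+O(p-d)\to 2^{-d}L^{-d}$,
and all constants are uniform in $p$, as you anticipated.

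\textbf{The paper's route.} The paper integrates by parts, writes $(r+L)^{-p-1}$ as a Gamma integral, and expands $(\sinh r)^{-d}$ in a binomial series. This yields the exact representation \eqref{eq1} through the sums $\sum_k\frac{(d+k-1)!}{k!}(d+2k+\lambda)^{-p}$. It then applies the Hurwitz-zeta residue (Lemma \ref{lemma: limiting case p tends to d}) and dominated convergence in $\lambda$.

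\textbf{What each buys.} Your argument uses no special functions. It shows that only the leading behaviour $K(s)\sim 2^{-d}s^{-d-1}$ matters, so it works for any kernel with that local singularity and an integrable tail. The paper's computation gives an exact series formula for $W_{p,d,R}$ tied to the Parini--Ruf constant $C_{p,d}$. The same lemma also yields $\lim_{p\to d^+}(p-d)C_{p,d}=2^{-d}$.

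\textbf{A harmless slip.} Your final argument never uses the abandoned series digression, but its coefficients are off. The correct identity is
$\frac{(1+t^2)t^{d-1}}{(1-t^2)^{d+1}}=\frac{1}{d}\frac{\text{d}}{\text{d}t}\bigl(t^d(1-t^2)^{-d}\bigr)=\frac{1}{d!}\sum_{k\ge0}\frac{(d+k-1)!}{k!}(d+2k)\,t^{d+2k-1}$,
so your factors $\frac{1}{2d}$ and $\frac{1}{2(d-1)!}$ should be $\frac{1}{d}$ and $\frac{1}{d!}$.
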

\begin{proof}
Let $t=e^{-r}$. Then $dt = -e^{-r} \, dr$. Also,
\begin{equation*}
1+t^{2} = 2e^{-r} \cosh r, \quad 1-t^{2} = 2 e^{-r} \sinh r, \quad \log \frac{1}{t} = r, \quad t^{d-1} = e^{-r(d-1)}.
\end{equation*}
Therefore, using the change of variables as mentioned above, we have
\begin{equation}\label{eq2}
    W_{p,d,R}(x) = \frac{2^{-d+1}\om_{d-1}}{|x|^{d}} \int_{0}^{\infty} \frac{\cosh r}{(\sinh r)^{d+1}} \left( \frac{r}{r+ \log \frac{R}{|x|}} \right)^{p} \, dr = \frac{2^{-d+1}\om_{d-1}}{|x|^{d}} I ,
\end{equation}
where 
\begin{equation*}
    I := \int_{0}^{\infty} \frac{\cosh r}{(\sinh r)^{d+1}} \left( \frac{r}{r+ \log \frac{R}{|x|}} \right)^{p} \, dr.
\end{equation*}
Applying integration by parts and using $-\frac{1}{d} \left( \frac{\text{d}}{\text{d}r} (\sinh r)^{-d} \right) = \frac{\cosh r}{(\sinh r)^{d+1}}$, we obtain
\begin{align*}
    I & = - \frac{1}{d} \left[ \left( \frac{r}{r+ \log \frac{R}{|x|}} \right)^{p} \frac{1}{(\sinh r)^{d}}  \right]_{0}^{\infty} \\ & \quad + \frac{p}{d} \log \left( \frac{R}{|x|} \right) \int_{0}^{\infty} \frac{r^{p-1}}{(\sinh r)^{d} \left( r+ \log \frac{R}{|x|} \right)^{p+1}} \, dr. 
\end{align*}
As $r \to 0$, we have $\sinh r \sim r$. Therefore, using $p-d>0$, we have
\begin{equation*}
    \left( \frac{r}{r+ \log \frac{R}{|x|}} \right)^{p} \frac{1}{(\sinh r)^{d}} \to 0 \quad \text{as } r \to 0.
\end{equation*}
Also, $\sinh r$ diverges to $\infty$ as $r \to \infty$. Thus, we have
\begin{equation*}
    \left( \frac{r}{r+ \log \frac{R}{|x|}} \right)^{p} \frac{1}{(\sinh r)^{d}} \to 0 \quad \text{as } r \to \infty.
\end{equation*}
Hence, the boundary term vanishes. Consequently,
\begin{equation*}
    I  =  \frac{p}{d} \log \left( \frac{R}{|x|} \right) \int_{0}^{\infty} \frac{r^{p-1}}{(\sinh r)^{d} \left( r+ \log \frac{R}{|x|} \right)^{p+1}} \, dr.  
\end{equation*}
Using the definition of the $\Ga$ function, we have
\begin{equation*}
    \frac{1}{\left( r+ \log \frac{R}{|x|} \right)^{p+1}} = \frac{1}{\Gamma (p+1)} \int_{0}^{\infty} \lambda^{p} e^{-\lambda \left(r+ \log \frac{R}{|x|}\right)} \, d \lambda.
\end{equation*}
Also, using $0< e^{-2r} <1$ for all $r > 0$, and generalised binomial series,
\begin{align*}
    (\sinh r)^{-d} = 2^{d} e^{-dr} (1-e^{-2r})^{-d} = 2^{d} e^{-dr} \sum_{k =0}^{\infty} c_{k} e^{-2kr}= 2^{d} \sum_{k =0}^{\infty} c_{k} e^{-(d+2k)r},
    \end{align*}
where 
\begin{equation*}
    c_{k} := \frac{(d)_{k}}{k!} = \frac{\Gamma(d+k)}{\Gamma(d) \Gamma(k+1)}, \quad \text{for } k \geq 0.
\end{equation*}
Combining the above two relations to $I$, we have
\begin{equation*}
    I= \frac{2^{d}p}{d\,\Gamma(p+1)} \left( \log \frac{R}{|x|} \right) \int_{0}^{\infty} \lambda^{p}e^{-\lambda\left( \log \frac{R}{|x|}  \right)} \sum_{k =0}^{\infty} c_{k}  \left( \int_{0}^{\infty} r^{p-1} e^{-(d+2k+\lambda)r} \, dr \right) d \lambda.
\end{equation*}
Using the definition of the $\Gamma$ function together with $p\,\Gamma(p) = \Gamma(p+1)$, the above relation reduces to 
\begin{equation}\label{eq1}
  I=   \frac{2^{d}}{d} \left( \log \frac{R}{|x|} \right) \int_{0}^{\infty} \lambda^{p} e^{-\lambda \left( \log \frac{R}{|x|} \right)} \left( \sum_{k =0}^{\infty} \frac{c_{k}}{ (d+2k+ \lambda)^{p}}  \right) \, d \lambda.
\end{equation}
Note that
\begin{equation*}
    \sum_{k =0}^{\infty} \frac{c_{k}}{ (d+2k+ \lambda)^{p}} = \frac{1}{\Gamma(d)} \sum_{k =0}^{\infty} \frac{(d+k-1)!}{k!} \frac{1}{(d+2k+ \lambda)^{p}}.
\end{equation*}
From Lemma \ref{lemma: limiting case p tends to d}, for any $\lambda \geq 0$,
\begin{equation*}
 \lim_{p \to d^+}  (p-d)     \sum_{k =0}^{\infty} \frac{(d+k-1)!}{k!} \frac{1}{(d+2k+ \lambda)^{p}} = 2^{-d}.
\end{equation*}
Therefore, using \eqref{eq1} in the definition of $W_{p,d,R}$ (see \eqref{eq2}), taking limit $p \to d^+$, and using the dominated convergence theorem, we obtain
\begin{align*}
    \lim_{p \to d^+} (p-d) W_{p,d,R}(x) & = \frac{2^{-d+1} \om_{d-1}}{ |x|^{d}} \frac{1}{d \, \Gamma(d)}  \left( \log \frac{R}{|x|} \right) \int_{0}^{\infty} \lambda^{d} e^{-\lambda \left( \log \frac{R}{|x|} \right)} \, d \lambda \\ &  = \frac{2^{-d+1} \om_{d-1}}{ |x|^{d}\left( \log \frac{R}{|x|} \right)^{d}} .
\end{align*}
  This completes the proof of the lemma. 
\end{proof}

\subsection{Asymptotic behaviour of \texorpdfstring{$W_{p,d,R}$}{WpdR}}\label{Subsecton: Asymptotic behaviour WpdR} In this subsection, we establish two-sided bounds for the weight function $W_{p,d,R}$ and derive its asymptotic behaviour as $|x|\to 0^+$ and $|x|\to R^-$.

 For $p>d\ge 1$, let 
\begin{equation}\label{Cpd}
C_{p,d}=\int_{0}^{1}\frac{t^{d-1}\left(1+t^2\right)\left(\log\frac{1}{t}\right)^p}{\left(1-t^2\right)^{d+1}}\,dt.
\end{equation}
This constant already appeared in the work of Parini and Ruf \cite{MR3797987} in the context of a fractional Trudinger--Moser inequality. In particular, Parini and Ruf showed that 
$$
C_{p,d}=\frac{\Gamma(p+1)}{d!}\sum_{k=0}^{\infty}\frac{(d+k-1)!}{k!}\frac{1}{(d+2k)^p}.
$$
For example, 
\begin{align*}
C_{p,1}&=(1-2^{-p})\Ga(p+1)\ze(p),\\
C_{p,2}&=2^{-p-1}\Gamma(p+1)\zeta(p-1),\\
C_{p,3}&=\frac{\Gamma(p+1)}{3\cdot 2^{p+1}}\left[(2^{p-2}-1)\zeta(p-2)-\frac{1}{4}(2^{p}-1)\zeta(p)\right].  
\end{align*}
In general, $C_{p,d}$ may be expressed in terms of the Riemann and Hurwitz zeta functions, see \eqref{Hurwitz}. Moreover, by Lemma \ref{lemma: limiting case p tends to d},
$$
\lim_{p\rightarrow d^+}(p-d)C_{p,d}=2^{-d}.
$$

We now establish two-sided estimates for the weight function $W_{p,d,R}$. These estimates provide an explicit comparison with a simpler logarithmic weight and, in particular, allow us to determine the asymptotic behaviour of $W_{p,d,R}$ as $|x|\to 0^+$.

\begin{prop}\label{prop: asymptotics of WpdR}
Let $W_{p,d,R}$ be given by \eqref{WpdR} and $C_{p,d}$ be given by \eqref{Cpd}. Then, for $0<|x|<R$, 
$$
\frac{2\om_{d-1}}{|x|^d\left(\log\frac{R}{|x|}\right)^p}\left(C_{p,d}-\frac{p\,C_{p+1,d}}{\log\frac{R}{|x|}}\right)<W_{p,d,R}(x)<\frac{2\om_{d-1}C_{p,d}}{|x|^d\left(\log\frac{R}{|x|}\right)^p}.
$$
In consequence, 
$$
W_{p,d,R}(x)\sim \frac{2\om_{d-1}C_{p,d}}{|x|^d\left(\log\frac{R}{|x|}\right)^p},\text{ as }|x|\rightarrow 0^+.
$$
\end{prop}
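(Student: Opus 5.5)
Writing $L:=\log\frac{R}{|x|}>0$ and $s:=\log\frac1t$ for $t\in(0,1)$, we have $\log\frac{R}{t|x|}=L+s$, so
$$
W_{p,d,R}(x)=\frac{2\om_{d-1}}{|x|^dL^p}\int_0^1\frac{(1+t^2)t^{d-1}}{(1-t^2)^{d+1}}\left(\log\tfrac1t\right)^p\left(1+\frac{s}{L}\right)^{-p}\,dt .
$$
Both bounds therefore reduce to pointwise two-sided estimates of the factor $g(s):=(1+s/L)^{-p}$ for $s>0$, which are then integrated against the positive density $\frac{(1+t^2)t^{d-1}}{(1-t^2)^{d+1}}(\log\frac1t)^p$. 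Finiteness of $C_{p,d}$ and $C_{p+1,d}$ is clear for $p>d$: near $t=1$ the integrand behaves like $(1-t)^{p-d-1}$, and near $t=0$ it is bounded by a power of $\log\frac1t$ times $t^{d-1}$.

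\textbf{Upper bound.} For $s>0$ we have $g(s)<1$ strictly. Integrating gives $W_{p,d,R}(x)<\frac{2\om_{d-1}C_{p,d}}{|x|^dL^p}$, with strict inequality because the integrand is positive on a set of positive measure.

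\textbf{Lower bound.} By Bernoulli's inequality (equivalently, convexity of $y\mapsto(1+y)^{-p}$ and its tangent line at $0$), $(1+y)^{-p}\ge 1-py$ for $y>-1$, with strict inequality for $y>0$. Hence $g(s)>1-\frac{ps}{L}$. Multiplying by $(\log\frac1t)^p$ turns the correction term into $\frac{p}{L}(\log\frac1t)^{p+1}$, and integrating gives exactly
$$
\frac{2\om_{d-1}}{|x|^dL^p}\left(C_{p,d}-\frac{p\,C_{p+1,d}}{L}\right),
$$
again with strict inequality. When the bracket is negative the lower bound is trivial but still true, since $W_{p,d,R}>0$.

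\textbf{Asymptotics.} As $|x|\to0^+$ we have $L\to\infty$, so $pC_{p+1,d}/L\to0$. Dividing both bounds by $\frac{2\om_{d-1}C_{p,d}}{|x|^dL^p}$ squeezes the ratio to $1$.

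There is no real obstacle. The only points needing care are the pointwise Bernoulli step and confirming that $C_{p+1,d}<\infty$, which holds since $p+1>d$.
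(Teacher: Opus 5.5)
Your proposal is correct and follows the paper's proof essentially verbatim: factor out $\left(\log\frac{R}{|x|}\right)^{-p}$, use $\left(1+\log\frac{1}{t}/\log\frac{R}{|x|}\right)^{-p}<1$ for the upper bound, and use the convexity (tangent-line) inequality $(1+a)^{-p}>1-pa$ for $a>0$ to get the lower bound in terms of $C_{p,d}$ and $C_{p+1,d}$. Your extra remarks on the finiteness of $C_{p,d}$ and $C_{p+1,d}$ and on the squeeze argument for the asymptotics are accurate; the paper leaves these points implicit.
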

\begin{proof}
Since $\left(\log\frac{R}{t|x|}\right)^{-p}<\left(\log\frac{R}{|x|}\right)^{-p}$ for $0<t<1$, the upper bound is obvious, hence, we will only prove the lower bound. For $0<r<R$, we have 
\begin{align*}
\int_{0}^{1}\frac{t^{d-1}(1+t^2)}{(1-t^2)^{d+1}}\frac{\left(\log\frac{1}{t}\right)^p}{\left(\log\frac{1}{t}+\log\frac{R}{r}\right)^p}\,dt=\frac{1}{\left(\log\frac{R}{r}\right)^p}\int_{0}^{1}\frac{t^{d-1}(1+t^2)}{(1-t^2)^{d+1}}\frac{\left(\log\frac{1}{t}\right)^p}{\left(1+\frac{\log\frac{1}{t}}{\log\frac{R}{r}}\right)^p}\,dt.    
\end{align*}
Using convexity, we have $(1+a)^{-p}>1-ap$ for $a>0$. Therefore,
\begin{align*}
 \int_{0}^{1}&\frac{t^{d-1}(1+t^2)}{(1-t^2)^{d+1}}\frac{\left(\log\frac{1}{t}\right)^p}{\left(1+\frac{\log\frac{1}{t}}{\log\frac{R}{r}}\right)^p}\,dt\\
 &>\int_{0}^{1}\frac{t^{d-1}(1+t^2)\left(\log\frac{1}{t}\right)^p}{(1-t^2)^{d+1}}\,dt-\frac{p}{\log\frac{R}{r}}\int_{0}^{1}\frac{t^{d-1}(1+t^2)\left(\log\frac{1}{t}\right)^{p+1}}{(1-t^2)^{d+1}}\,dt\\
 &=C_{p,d}-\frac{p\,C_{p+1,d}}{\log\frac{R}{r}}.
\end{align*}
Comparing it with the definition of $W_{p,d,R}$, we obtain the desired estimate. 
\end{proof}

\begin{rem}
The lower bound in Proposition \ref{prop: asymptotics of WpdR} is informative only if $\log\frac{R}{|x|}>\tfrac{pC_{p+1,d}}{C_{p,d}}$, that is $0<|x|<Re^{-\frac{pC_{p+1,d}}{C_{p,d}}}$; otherwise, it is nonpositive.
\end{rem}

The following proposition describes the asymptotic behaviour of
$W_{p,d,R}$, as $|x|\to R^{-}$.
\begin{prop}
We have 
$$
W_{p,d,R}(x)\sim\frac{2^{-d+1}\om_{d-1}\B(d,p-d)}{R^d\left(\log\frac{R}{|x|}\right)^d},\quad\text{ as }|x|\rightarrow R^{-}.
$$
\end{prop}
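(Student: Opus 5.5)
We work with the representation \eqref{eq2} from the proof of Lemma \ref{Lemma: limit Wpdr}. Writing $L:=\log\frac{R}{|x|}$, so that $L\to 0^+$ as $|x|\to R^-$, we have
$$
W_{p,d,R}(x)=\frac{2^{-d+1}\om_{d-1}}{|x|^d}\int_0^\infty\frac{\cosh r}{(\sinh r)^{d+1}}\left(\frac{r}{r+L}\right)^p\,dr .
$$
Since $|x|^{-d}\to R^{-d}$, it suffices to show that
$$
I(L):=\int_0^\infty\frac{\cosh r}{(\sinh r)^{d+1}}\left(\frac{r}{r+L}\right)^p\,dr\sim \B(d,p-d)\,L^{-d},\quad L\to0^+.
$$
The divergence comes only from small $r$, where $\cosh r/(\sinh r)^{d+1}\approx r^{-d-1}$. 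So the natural step is the substitution $r=Ls$, which gives
$$
L^{d}I(L)=\int_0^\infty \frac{L^{d+1}\cosh(Ls)}{(\sinh(Ls))^{d+1}}\left(\frac{s}{s+1}\right)^p ds .
$$

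Pointwise in $s>0$, the integrand converges to $s^{-d-1}\left(\frac{s}{1+s}\right)^p=\frac{s^{p-d-1}}{(1+s)^p}$. Integrating this limit gives exactly $\B(p-d,d)=\B(d,p-d)$, via the standard formula $\int_0^\infty s^{a-1}(1+s)^{-a-b}\,ds=\B(a,b)$ with $a=p-d$ and $b=d$.

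To justify the interchange of limit and integral, I split at $Ls=1$.

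\emph{Region $Ls\le 1$.} We use $\sinh y\ge y$ and $\cosh y\le\cosh 1$. These give the bound $\cosh 1\cdot s^{-d-1}\left(\frac{s}{1+s}\right)^p$, which is integrable on $(0,\infty)$ because $p>d$ (near $0$ it behaves like $s^{p-d-1}$, and at infinity like $s^{-d-1}$).

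\emph{Region $Ls>1$.} Here $\cosh y/(\sinh y)^{d+1}\le C e^{-dy}\le C$. This contributes at most
$$
CL^{d+1}\int_{1/L}^\infty e^{-dLs}\,ds=\frac{C}{d}\,L^{d}e^{-d}\to0 ,
$$
and this piece tends to zero.

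Alternatively, one can bound $y^{d+1}\cosh y/(\sinh y)^{d+1}$ by a constant (this is used in the proof of Theorem \ref{Thm: Failure of logarithmic Hardy inequality}). That gives the dominating function $C s^{-d-1}\left(\frac{s}{1+s}\right)^p$ globally, and dominated convergence then applies directly.

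The main (mild) obstacle is exactly this domination step. Once it is in place, dominated convergence yields $L^dI(L)\to\B(d,p-d)$, which combined with $|x|^{-d}\to R^{-d}$ gives the claimed asymptotic equivalence.
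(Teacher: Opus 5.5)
Your proof is correct and follows the paper's argument: you use the same representation \eqref{eq2}, the same substitution $r\mapsto L r$ with $L=\log\frac{R}{|x|}$, and dominated convergence to obtain the limit $\B(d,p-d)$. Your explicit domination step, in particular the global bound from the boundedness of $y^{d+1}\cosh y/(\sinh y)^{d+1}$, is exactly the bound the paper uses implicitly when it simply invokes the dominated convergence theorem.
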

\begin{proof}
 By \eqref{eq2},
 $$
W_{p,d,R}(x) = \frac{2^{-d+1}\om_{d-1}}{|x|^{d}} \int_{0}^{\infty} \frac{\cosh r}{(\sinh r)^{d+1}} \left( \frac{r}{r+ \log \frac{R}{|x|}} \right)^{p} \, dr.
$$
Let $\ve=\log\frac{R}{|x|}$. Substituting $r\mapsto r\ve$, we have
\begin{align*}
I(\ve):=\int_{0}^{\infty} \frac{\cosh r}{(\sinh r)^{d+1}} \left( \frac{r}{r+ \ve} \right)^{p} \, dr=\ve^{-d}\int_{0}^{\infty}\cosh(r\ve)\left(\frac{r\ve}{\sinh r\ve}\right)^{d+1}\frac{r^{p-d-1}}{(1+r)^p}\,dr.     
\end{align*}
Using the dominated convergence theorem, 
$$
\lim_{\ve\rightarrow 0^+}\int_{0}^{\infty}\cosh(r\ve)\left(\frac{r\ve}{\sinh r\ve}\right)^{d+1}\frac{r^{p-d-1}}{(1+r)^p}\,dr=\int_{0}^{\infty}\frac{r^{p-d-1}}{(1+r)^p}\,dr=\B(d,p-d)
$$
and the assertion follows. 
\end{proof}


\color{black}
\section{Proof of Theorem \ref{thm:critical fractional Hardy half-space} and Asymptotic behaviour of \texorpdfstring{$V_{p,d,R}$}{VpdR}}\label{Section 3}


\smallskip

We begin by establishing an auxiliary result that will be used in the proof of the fractional Hardy inequality in the half-space and in the subsequent analysis of the weight function $V_{p,d,R}$.

\begin{lem}
Let $p>1$, $sp=1$, $R>0$, $\operatorname{supp} u\subset (0,R)$. Let 
$$
V_{p,R}(x)=\frac{2}{x}\int_{0}^{1}\frac{\left(\log\frac{1}{t}\right)^p}{(1-t)^2\left(\log\frac{R}{tx}\right)^p}\,dt.
$$
Then,
\begin{equation}\label{logarithmic Hardy on halfline}
\int_{0}^{\infty}\int_{0}^{\infty}\frac{|u(x)-u(y)|^p}{|x-y|^2}\,dy\,dx\ge\left(\frac{p-1}{p}\right)^p\int_{0}^{R}|u(x)|^p V_{p,R}(x)\,dx.    
\end{equation}
\end{lem}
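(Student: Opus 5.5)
We mirror the whole-space proof, using the half-line version of the radial reduction. First split the double integral by symmetry into the region $y<x$ and substitute $y=tx$ with $t\in(0,1)$, so that $dy=x\,dt$ and $|x-y|^2=x^2(1-t)^2$:
\begin{align*}
\int_{0}^{\infty}\int_{0}^{\infty}\frac{|u(x)-u(y)|^p}{|x-y|^2}\,dy\,dx
&=2\int_{0}^{\infty}\int_{0}^{x}\frac{|u(x)-u(y)|^p}{|x-y|^2}\,dy\,dx\\
&=2\int_{0}^{1}\frac{dt}{(1-t)^2}\int_{0}^{\infty}\frac{|u(x)-u(tx)|^p}{x}\,dx,
\end{align*}
where we used Tonelli's theorem, since everything is nonnegative. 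This is exactly the structure produced by Lemma \ref{lem: critical Gagliardo seminorm} in the whole-space case. The kernel $(1-t)^{-2}$ now plays the role of $\frac{t^{d-1}(1+t^2)}{(1-t^2)^{d+1}}$, and the factor $2$ plays the role of $2\om_{d-1}^2$.

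Next, for each fixed $t\in(0,1)$, apply Corollary \ref{corr} with $f=u$ and $y=t$. This is allowed because $\operatorname{supp}u\subset(0,R)\subset(0,R]$. It gives
$$
\int_{0}^{\infty}\frac{|u(x)-u(tx)|^p}{x}\,dx\ge\left(\frac{p-1}{p}\right)^p\left(\log\frac1t\right)^p\int_0^R\frac{|u(x)|^p}{x\left(\log\frac{R}{tx}\right)^p}\,dx.
$$
Insert this into the $t$-integral and use Tonelli's theorem again to exchange the $t$- and $x$-integrations. The resulting weight multiplying $\left(\frac{p-1}{p}\right)^p|u(x)|^p$ is precisely
$$
\frac{2}{x}\int_0^1\frac{\left(\log\frac1t\right)^p}{(1-t)^2\left(\log\frac{R}{tx}\right)^p}\,dt=V_{p,R}(x),
$$
which yields \eqref{logarithmic Hardy on halfline}.

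There is essentially no obstacle, since the work was already done in Lemma \ref{key lemma}. The only point needing care is that $u$ is merely required to be supported in $(0,R)$, with no continuity assumed. Corollary \ref{corr} was proved for measurable $f$, and all integrands above are nonnegative, so both uses of Tonelli's theorem are justified even when some integrals are infinite. I would also check that $V_{p,R}$ is finite, since it need not be for the inequality to make sense. Near $t=1$ we have $(\log\frac1t)^p\sim(1-t)^p$ with $p>1$, so the singularity of $(1-t)^{-2}$ is cancelled. Near $t=0$ the quotient $\bigl(\log\frac1t\bigr)^p\big/\bigl(\log\frac{R}{tx}\bigr)^p$ stays bounded by $1$, since $\log\frac{R}{tx}=\log\frac1t+\log\frac Rx$ with $\log\frac Rx>0$ for $x<R$.
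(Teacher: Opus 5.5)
Your proposal is correct and is the paper's proof: the paper writes the same identity $\int_0^\infty\int_0^\infty \frac{|u(x)-u(y)|^p}{|x-y|^2}\,dy\,dx=2\int_0^1\frac{dy}{(1-y)^2}\int_0^\infty\frac{|u(x)-u(xy)|^p}{x}\,dx$ and then applies Corollary \ref{corr} for each fixed $y$. Your additional remarks on Tonelli's theorem and on the finiteness of $V_{p,R}$ are valid, but the argument does not need them.
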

\begin{proof}
We have 
$$
\int_{0}^{\infty}\int_{0}^{\infty}\frac{|u(x)-u(y)|^p}{|x-y|^2}\,dy\,dx=2\int_{0}^{1}\frac{dy}{(1-y)^2}\int_{0}^{\infty}\frac{|u(x)-u(xy)|^p}{x}\,dx
$$
and the result follows from \eqref{key inequality}.
\end{proof}

We now prove Theorem \ref{thm:critical fractional Hardy half-space} by combining the preceding auxiliary results to establish the fractional Hardy inequality in the half-space.

\begin{proof}[\textbf{Proof of Theorem \ref{thm:critical fractional Hardy half-space}}]
First of all, let us observe that, if $f\colon (-a,\infty)\to\R$, $\supp f\subset(-a,R')$, and $R'+a>0$, then, an easy modification of \eqref{logarithmic Hardy on halfline} yields
\begin{equation}\label{Hardy halfline modified}
 \int_{-a}^{\infty}\int_{-a}^{\infty}\frac{|f(x)-f(y)|^p}{|x-y|^2}\,dy\,dx\ge\left(\frac{p-1}{p}\right)^p\int_{-a}^{R'}|f(x)|^p V_{p,R'+a}(x+a)\,dx  
\end{equation}
Let now $u$ be as in the statement of the theorem and let $\Sd_+:=\Sd\cap\R^d_+$. By the Loss--Sloane formula \cite[(26)]{MR2659764}, we have 
\begin{align*}
\int_{\R^d_+} & \int_{\R^d_+}\frac{|u(x)-u(y)|^p}{|x-y|^{d+1}}\,dy\,dx\\
&=\frac{1}{2}\int_{\Sd}d\om\int_{\{x\colon x\cdot\om=0\}}\mathcal{L}_{\om}(dx)\int_{\{x+t\om\in \mathbb{R}^{d}_{+}\}}\int_{\{x+r\om\in\mathbb{R}^{d}_{+}\}}\frac{|u(x+r\om)-u(x+t\om)|^p}{|r-t|^2}\,dr\,dt\\
&=\int_{\Sd_+}d\om\int_{\{x\colon x\cdot\om=0\}}\mathcal{L}_{\om}(dx)\int_{\{x+t\om\in \mathbb{R}^{d}_{+}\}}\int_{\{x+r\om\in\mathbb{R}^{d}_{+}\}}\frac{|u(x+r\om)-u(x+t\om)|^p}{|r-t|^2}\,dr\,dt,
\end{align*}
where $\mathcal{L}_{\om}$ is the Lebesgue measure on the plane $x\cdot\om=0$. Observe that the set $\{t\in\R\colon x+t\om\in\R^d_+\}$ is always a half-line $\left(-\frac{x_d}{\om_d},\infty\right)$ and, for fixed $x$ and $\om$, the function $f(r)=u(x+r\om)$ has support contained in $\left(-\tfrac{x_d}{w_d},\frac{R-x_d}{\om_d}\right)$. Therefore, using \eqref{Hardy halfline modified}, 
\begin{align*}
\int_{\{x+t\om\in\R^d_+\}} & \int_{\{x+r\om\in\R^d_+\}}\frac{|u(x+r\om)-u(x+t\om)|^p}{|r-t|^2}\,dr\,dt\\
&\ge \left(\frac{p-1}{p}\right)^p\int_{\{x+r\om\in A_R\}}|u(x+r\om)|^p V_{p,\frac{R}{\om_d}}\left(r+\tfrac{x_d}{\om_d}\right)\,dr\\
&=2\left(\frac{p-1}{p}\right)^p\int_{\{x+r\om\in A_R\}}\int_{0}^{1}\frac{|u(x+r\om)|^p}{r+\tfrac{x_d}{\om_d}}\frac{\left(\log\frac{1}{t}\right)^p}{(1-t)^2\left(\log\frac{R/\om_d}{t(r+x_d/\om_d)}\right)^p}\,dt\,dr\\
&=2\left(\frac{p-1}{p}\right)^p\om_d\int_{\{x+r\om\in A_R\}}\int_{0}^{1}\frac{|u(x+r\om)|^p}{r\om_d+x_d}\frac{\left(\log\frac{1}{t}\right)^p}{(1-t)^2\left(\log\frac{R}{t(r\om_d+x_d)}\right)^p}\,dt\,dr.
\end{align*}

Hence, 
\begin{align*}
\int_{\R^d_+}\int_{\R^d_+}&\frac{|u(x)-u(y)|^p}{|x-y|^{d+1}}\,dy\,dx\\
&\ge2\left(\frac{p-1}{p}\right)^p\int_{\Sd_+}\om_d\,d\om\int_{\{x\colon x\cdot\om=0\}}\mathcal{L}_{\om}(dx)\\
&\quad\times\int_{\{x+r\om\in A_R\}}\int_{0}^{1}\frac{|u(x+r\om)|^p}{r\om_d+x_d}\frac{\left(\log\frac{1}{t}\right)^p}{(1-t)^2\left(\log\frac{R}{t(r\om_d+x_d)}\right)^p}\,dt\,dr\\
&=\left(\frac{p-1}{p}\right)^p\int_{A_R}|u(x)|^p V_{p,d,R}(x_d)\,dx,
\end{align*}
where the last equality follows from the well known fact
$$
\int_{\Sd}|\om_d|^{\al}\,d\om=\frac{2\pi^{\frac{d-1}{2}}\Gamma\left(\frac{1+\al}{2}\right)}{\Gamma\left(\frac{d+\al}{2}\right)},\quad\al>-1.
$$
It remains to justify sharpness of the constant $\left(\tfrac{p-1}{p}\right)^p$ in \eqref{halfspace}. To this end, assume first $d=1$. Then, the sharpness can be obtained by a similar way as in the proof of Theorem \ref{Thm: logarithmic frac Hardy inequality}, with minor modifications, by considering test functions of the form
$$
u_N(x)=\begin{cases}
   0,&\quad0<x\le Re^{-2N},\,\tfrac{R}{e}\le x<R,\\
   \left(\log\frac{R}{x}\right)^{\frac{p-1}{p}}-1,&\quad Re^{-N}\le x\le \frac{R}{e},\\
   \left(N^{\frac{p-1}{p}}-1\right)\left(2-\frac{\log\frac{R}{x}}{N}\right),&\quad Re^{-2N}\le x\le Re^{-N}.
\end{cases}
$$
 Let now $d\ge 2$. We borrow the idea from \cite[Proof of Theorem 6.2]{MR4800921} and reduce the problem to the one-dimensional situation. Let us fix $\chi\in C^{1}_c(\R^{d-1})$ and $\eta\in C_c((0,R))$. For $N>0$, we define
$$
u_N(x)=\chi_N(x')\eta(x_d),
$$
where $\chi_N(x')=\frac{N^{\frac{1-d}{p}}}{\|\chi\|_{L^p(\R^{d-1})}}\chi\left(\frac{x'}{N}\right)$. Then, of course, $\supp u_N\subset A_R=\{0<x_d<R\}$ and 
$$
\int_{A_R}|u_N(x)|^pV_{p,d,R}(x_d)\,dx=\frac{\kappa_d}{2}\int_{0}^R|\eta(r)|^p V_{p,R}(r)\,dr,
$$
where $V_{p,R}=V_{p,1,R}$. Moreover, using triangle inequality, the same calculations as in \cite[Proof of Theorem 6.2]{MR4800921} yield
\begin{align*}
&\frac{[u_N]^p_{W^{1/p,p}(\R^d_+)}}{\displaystyle\int_{A_R}|u_N(x)|^pV_{p,d,R}(x_d)\,dx}\\
&\le\left[\frac{[\eta]_{W^{1/p,p}((0,\infty))}}{\displaystyle\left(\int_{0}^R|\eta(r)|^p V_{p,R}(r)\,dr\right)^\frac{1}{p}}+C\frac{\|\eta\|_{L^p((0,\infty))}}{N^{\frac{1}{p}}\|\chi\|_{L^p(\R^{d-1})}}\frac{[\chi]_{W^{1/p,p}(\R^{d-1})}}{\left(\displaystyle\int_{0}^{R}|\eta(r)|^p V_{p,R}(r)\,dr\right)^\frac{1}{p}}\right]^p,    
\end{align*}
for some $C>0$. We now let $N\rightarrow\infty$. Therefore, since $V_{p,d,R}=\frac{\kappa_d}{2}V_{p,R}$, sharpness for $d\ge 2$ follows from sharpness for $d=1$.
\end{proof}

\subsection{Asymptotic behaviour of \texorpdfstring{$V_{p,d,R}$}{VpdR}} In this subsection, we investigate the asymptotic behaviour of the weight function $V_{p,d,R}$ associated with the fractional Hardy inequality in the half-space, as $x_d\to 0^+$ and $x_d\to R^-$.

\smallskip

We first establish two-sided bounds for the weight function $V_{p,d,R}$ and determine its asymptotic behaviour as $x_d\to 0^+$.

\begin{prop}
Let $V_{p,d,R}$ be given by \eqref{VpdR}, $p>1$, $d\ge 1$ $R>0$. Then, for all $0<x_d<R$,
\begin{align*}
\frac{\kappa_d}{x_d\left(\log\frac{R}{x_d}\right)^p}&\left(\Ga(p+1)\ze(p)-\frac{p\,\Ga(p+2)\ze(p+1)}{\log\frac{R}{x_d}}\right)<V_{p,d,R}(x_d)<\frac{\kappa_d\Ga(p+1)\ze(p)}{x_d\left(\log\frac{R}{x_d}\right)^p}.
\end{align*}
In particular, 
$$
V_{p,d,R}(x_d)\sim \frac{\kappa_d\Ga(p+1)\ze(p)}{x_d\left(\log\frac{R}{x_d}\right)^p},\quad\text{ as }x_d\rightarrow 0^+.
$$
\end{prop}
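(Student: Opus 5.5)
The plan mirrors the proof of Proposition \ref{prop: asymptotics of WpdR}. First write $\log\frac{R}{tx_d}=\log\frac{1}{t}+\log\frac{R}{x_d}$ and factor out $\left(\log\frac{R}{x_d}\right)^{-p}$ from \eqref{VpdR}:
\[
V_{p,d,R}(x_d)=\frac{\kappa_d}{x_d\left(\log\frac{R}{x_d}\right)^p}\int_0^1\frac{\left(\log\frac1t\right)^p}{(1-t)^2\left(1+\frac{\log\frac1t}{\log\frac{R}{x_d}}\right)^p}\,dt .
\]
The upper bound then follows by dropping the factor $\bigl(1+\log\frac1t/\log\frac{R}{x_d}\bigr)^{-p}<1$, which holds strictly for every $0<t<1$. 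The lower bound follows from the convexity estimate $(1+a)^{-p}>1-pa$ for $a>0$, exactly as before. This produces the two integrals
\[
D_q:=\int_0^1\frac{\left(\log\frac1t\right)^q}{(1-t)^2}\,dt,\qquad q=p,\ p+1,
\]
and the lower bound is $D_p-\frac{p\,D_{p+1}}{\log\frac{R}{x_d}}$.

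The only genuine computation is the identification $D_q=\Gamma(q+1)\zeta(q)$ for $q>1$. I would substitute $t=e^{-r}$ to get
\[
D_q=\int_0^\infty\frac{r^q e^{-r}}{(1-e^{-r})^2}\,dr .
\]
Then expand $e^{-r}(1-e^{-r})^{-2}=\sum_{k\ge1}k e^{-kr}$ and integrate term by term; this is legitimate by Tonelli, since all terms are nonnegative. Each term gives $k\,\Gamma(q+1)k^{-q-1}$, and summing over $k$ yields $\Gamma(q+1)\zeta(q)$. The value is finite precisely because $q>1$. The integrand near $t=1$ behaves like $(1-t)^{q-2}$, which is integrable since $q>1$, consistent with the formula. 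Applying this with $q=p$ and $q=p+1$ gives the stated constants $\Gamma(p+1)\zeta(p)$ and $\Gamma(p+2)\zeta(p+1)$.

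Finally, the asymptotic equivalence as $x_d\to0^+$ follows because $\log\frac{R}{x_d}\to\infty$. The correction term $\frac{p\,\Gamma(p+2)\zeta(p+1)}{\log(R/x_d)}$ then tends to $0$, so both bounds, divided by $\frac{\kappa_d\Gamma(p+1)\zeta(p)}{x_d(\log(R/x_d))^p}$, tend to $1$. I expect no serious obstacle. The only point needing care is the series evaluation of $D_q$, namely justifying the interchange of sum and integral and checking convergence at $t=1$, where $p>1$ is used.
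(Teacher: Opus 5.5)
Your proposal is correct and is essentially the paper's proof. The paper likewise factors out $\left(\log\frac{R}{x_d}\right)^{-p}$, reuses the convexity bound $(1+a)^{-p}>1-pa$ from Proposition~\ref{prop: asymptotics of WpdR}, and identifies $\int_0^1\left(\log\frac1t\right)^q(1-t)^{-2}\,dt=\Gamma(q+1)\zeta(q)$ by expanding $(1-t)^{-2}$ in a power series, so your substitution $t=e^{-r}$ is only a cosmetic variant; your Tonelli justification of the termwise integration and your integrability check at $t=1$ (using $q>1$) add details the paper omits.
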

\begin{proof}
Consider 
$$
C_p=\int_{0}^{1}\frac{\left(\log\frac{1}{t}\right)^p}{(1-t)^2}\,dt.
$$
Expanding $(1-t)^{-2}$ into Taylor series, it may be easily shown that $C_p=\Ga(p+1)\ze(p)$. Hence, the proof of the asymptotics follow the same pattern as the proof of Proposition \ref{prop: asymptotics of WpdR} and we omit the details.
\end{proof}

Of course, the lower bound in the estimate above is nontrivial for $0<x_d<R e^{-\frac{p(p+1)\ze(p+1)}{\ze(p)}}$.

\smallskip

The next proposition describes the asymptotic behaviour of the weight function $V_{p,d,R}$ as $x_d\to R^-$.

\begin{prop}
It holds
$$
V_{p,d,R}(x_d)\sim\frac{\kappa_d}{(p-1)}\frac{1}{R\log\frac{R}{x_d}},\quad\text{ as }x_d\rightarrow R^-.
$$
\end{prop}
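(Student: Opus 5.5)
Set $\ve=\log\frac{R}{x_d}$, so that $x_d\to R^-$ corresponds to $\ve\to 0^+$ and $x_d=Re^{-\ve}\to R$. Then
$$
V_{p,d,R}(x_d)=\frac{\kappa_d}{x_d}\int_0^1\frac{\left(\log\frac1t\right)^p}{(1-t)^2\left(\log\frac1t+\ve\right)^p}\,dt .
$$
The plan follows the proof of the corresponding proposition for $W_{p,d,R}$. First substitute $t=e^{-r}$, which gives
$$
J(\ve):=\int_0^\infty \frac{e^{-r}}{(1-e^{-r})^2}\left(\frac{r}{r+\ve}\right)^p dr .
$$
The only singularity as $\ve\to0$ comes from $r$ near $0$, where $\frac{e^{-r}}{(1-e^{-r})^2}\sim r^{-2}$. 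Only a non-integrable $r^{-2}$ singularity appears, compared with the $r^{-d-1}$ in the $W$ case. We therefore expect $J(\ve)\sim c\,\ve^{-1}$.

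Next rescale $r\mapsto r\ve$:
$$
J(\ve)=\ve^{-1}\int_0^\infty \frac{(r\ve)^2e^{-r\ve}}{(1-e^{-r\ve})^2}\,\frac{r^{p-2}}{(1+r)^p}\,dr .
$$
The factor $g(s)=\frac{s^2e^{-s}}{(1-e^{-s})^2}=\left(\frac{s/2}{\sinh(s/2)}\right)^2$ is bounded by $1$ on $(0,\infty)$ and tends to $1$ as $s\to0^+$. The function $\frac{r^{p-2}}{(1+r)^p}$ is integrable on $(0,\infty)$ because $p>1$: near $0$ it behaves like $r^{p-2}$ with $p-2>-1$, and near $\infty$ it behaves like $r^{-2}$.

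Dominated convergence then gives
$$
\ve J(\ve)\to\int_0^\infty\frac{r^{p-2}}{(1+r)^p}\,dr=\B(p-1,1)=\frac{1}{p-1}.
$$
Finally, multiplying by $\kappa_d/x_d$ and using $x_d\to R$ yields
$$
V_{p,d,R}(x_d)\sim\frac{\kappa_d}{(p-1)R\log\frac{R}{x_d}} .
$$

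The only step that needs care is the domination, namely the bound $g\le1$. This follows from $\sinh u\ge u$. The remaining steps are routine.
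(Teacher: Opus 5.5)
Your proof is correct and follows the paper's argument: the substitution $t=e^{-r}$, the rescaling $r\mapsto r\ve$, dominated convergence, and the Beta integral $\int_0^\infty r^{p-2}(1+r)^{-p}\,dr=\frac{1}{p-1}$. Your explicit dominating bound $\frac{s^2e^{-s}}{(1-e^{-s})^2}=\bigl(\frac{s/2}{\sinh(s/2)}\bigr)^2\le 1$ supplies a detail that the paper leaves implicit.
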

\begin{proof}
Let $\ve=\log\frac{R}{x_d}$. Substituting $t\mapsto e^{-r}$, we have 
\begin{align*}
\int_{0}^{1}\frac{\left(\log\frac{1}{t}\right)^p}{(1-t)^2\left(\log\frac{1}{t}+\ve\right)^p}\,dt&=\int_{0}^{\infty}\frac{r^pe^{-r}}{\left(1-e^{-r}\right)^2(\ve+r)^p}\,dr\\
&=\frac{1}{\ve}\int_{0}^{\infty}\left(\frac{r\ve}{1-e^{-r\ve}}\right)^2\frac{r^{p-2}}{(1+r)^{p}}e^{-r\ve}\,dr
\end{align*}
and, by dominated convergence, 
\begin{align*}
\lim_{\ve\rightarrow 0^+}\int_{0}^{\infty}\left(\frac{r\ve}{1-e^{-r\ve}}\right)^2\frac{r^{p-2}}{(1+r)^{p}}e^{-r\ve}\,dr=\int_{0}^{\infty}\frac{r^{p-2}}{(1+r)^p}\,dr=\B(1,p-1)=\frac{1}{p-1}.   
\end{align*}
The proof is complete.
\end{proof}
\color{black}

\section{Appendix}\label{Section 4}

In this section, we establish a Bourgain--Brezis--Mironescu type formula in the case $p>d$, where $d \geq 1$. In particular, we prove the asymptotic formula as $p \to d$, which yields the corresponding local integral involving the gradient of the function.

\begin{thm}\label{Thm: BBM type Formula}
Let $d\geq 1$, $u\in C_c^1(\mathbb R^d)$ with
$\operatorname{supp} u\subset B_R$. Then
\begin{equation*}
\lim_{p \to d^+}(p-d)
\int_{\mathbb R^d}\int_{\mathbb R^d}
\frac{|u(x)-u(y)|^{p}}{|x-y|^{2d}}\,dy\,dx =
\frac{2\pi^{\frac{d-1}{2}}
\Gamma\left(\frac{1+d}{2}\right)}
{\Gamma(d)}
\int_{B_R}|\nabla u(x)|^d\,dx.
\end{equation*}
\end{thm}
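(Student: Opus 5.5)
The plan is to split the double integral into a near-diagonal part $|x-y|<\delta$ and a far part $|x-y|\ge\delta$, with $\delta>0$ small and fixed. For the far part, I would use $|u(x)-u(y)|^p\le 2^{p-1}(|u(x)|^p+|u(y)|^p)$ and $\int_{|h|\ge\delta}|h|^{-2d}\,dh=\om_{d-1}\delta^{-d}/d$. This bounds the far part by $C(d)\delta^{-d}\|u\|_{L^p}^p$, which stays bounded as $p\to d^+$. Multiplying by $(p-d)$, this contribution tends to zero for every fixed $\delta$.

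For the near part, substitute $y=x+h$ and write $h=\rho\sigma$ with $\rho\in(0,\delta)$ and $\sigma\in\Sd$. Since $u\in C^1_c$ and $\nabla u$ is uniformly continuous, we have $u(x+\rho\sigma)-u(x)=\rho\,\nabla u(x)\cdot\sigma+\rho\,\epsilon(x,\rho\sigma)$, where $\sup_{x,|h|<\delta}|\epsilon|=:\eta(\delta)\to0$ as $\delta\to0$. Also, for $\rho<\delta$ the integrand vanishes unless $x\in B_{R+\delta}$. The near part then becomes
\begin{equation*}
\int_{B_{R+\delta}}\int_{\Sd}\int_0^\delta \rho^{p}\,|\nabla u(x)\cdot\sigma+\epsilon|^p\,\rho^{-2d}\rho^{d-1}\,d\rho\,d\sigma\,dx .
\end{equation*}
The radial factor is $\int_0^\delta\rho^{p-d-1}\,d\rho=\delta^{p-d}/(p-d)$. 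After multiplying by $(p-d)$ it becomes $\delta^{p-d}\to1$. This is the mechanism that produces the factor $(p-d)$.

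To control the error, I would use the elementary bound $\bigl||a+b|^p-|a|^p\bigr|\le p(|a|+|b|)^{p-1}|b|$ with $|a|\le\|\nabla u\|_\infty$ and $|b|\le\eta(\delta)$. This gives an error of order $C\eta(\delta)|B_{R+\delta}|$, uniformly for $p\in(d,d+1)$. The main term is
\begin{equation*}
\delta^{p-d}\int_{B_{R+\delta}}\int_{\Sd}|\nabla u(x)\cdot\sigma|^p\,d\sigma\,dx=\delta^{p-d}\int_{\Sd}|\sigma_d|^p\,d\sigma\int|\nabla u|^p\,dx,
\end{equation*}
by rotation invariance (the integrand vanishes outside $\supp u\subset B_R$). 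By the formula $\int_{\Sd}|\sigma_d|^\alpha\,d\sigma=2\pi^{\frac{d-1}{2}}\Gamma(\frac{1+\alpha}{2})/\Gamma(\frac{d+\alpha}{2})$ quoted in Section \ref{Section 3}, taking $\alpha=p\to d$ gives exactly the constant in the statement. Since $\nabla u$ is bounded with compact support, $\int|\nabla u|^p\to\int|\nabla u|^d$ by dominated convergence.

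Putting this together: for fixed $\delta$, the $\limsup$ and $\liminf$ as $p\to d^+$ of $(p-d)$ times the energy lie within $C\eta(\delta)$ of the claimed value. Letting $\delta\to0$ then finishes the proof.

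The main obstacle is the uniformity of the error estimate in $p$ near $d$. In particular, the remainder must be handled so that no hidden factor $1/(p-d)$ multiplies $\eta(\delta)$. This works because the remainder also carries the full weight $\rho^{p-d-1}$, so its radial integral is again $\delta^{p-d}/(p-d)$, and the factor $(p-d)$ cancels it. The remaining steps are routine.
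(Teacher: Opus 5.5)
Your proposal is correct and follows essentially the same route as the paper: the same near/far splitting at scale $\delta$, the same linearization of $u(x+\rho\sigma)-u(x)$ via uniform continuity of $\nabla u$, and the same identity $(p-d)\int_0^\delta \rho^{p-d-1}\,d\rho=\delta^{p-d}$, which absorbs the factor $(p-d)$. The differences are minor: you bound the linearization error pointwise with the mean-value estimate $\bigl||a+b|^p-|a|^p\bigr|\le p(|a|+|b|)^{p-1}|b|$, whereas the paper sandwiches the $L^p$ norms using Minkowski's inequality, and you apply rotation invariance at exponent $p$ before letting $p\to d^+$, whereas the paper passes to the limit first and then uses rotation invariance at exponent $d$.
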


\begin{proof}
Making the change of
variables $y=x+h$, we obtain
\begin{equation}\label{eq:change}
(p-d)\int_{\R^d}\int_{\R^d}
\frac{|u(x)-u(y)|^p}{|x-y|^{2d}}\,dy\,dx =(p-d)\int_{\R^d}
\frac{1}{|h|^{2d}}
\int_{\R^d}|u(x+h)-u(x)|^p\,dx\,dh.
\end{equation}
Fix $\delta\in(0,1)$. We split the integral on the
right-hand side of \eqref{eq:change} into the regions
$|h|\ge\delta$ and $|h|<\delta$.

\smallskip

\noindent For the first integral, using $|u(x+h)-u(x)|^p
\leq 2^{p-1}\bigl(|u(x+h)|^p+|u(x)|^p\bigr)$, we obtain
\begin{equation*}
\int_{\R^d}|u(x+h)-u(x)|^p\,dx \leq 2^p\int_{\R^d}|u(x)|^p\,dx \leq 2^p\|u\|_{L^{\infty}(\mathbb{R}^{d})}^p|B_R|.
\end{equation*}
We choose $p$ such that
$d<p\leq d+1$. Thus the last expression is bounded
independently of $p$. Consequently, there exists a constant
$C>0$, independent of $p$ and $\delta$, such that
\begin{align*}
  0 & \leq (p-d)\int_{|h|\geq\delta}
\frac{1}{|h|^{2d}}
\int_{\R^d}|u(x+h)-u(x)|^p\,dx\,dh \\
& \leq C(p-d)\int_{|h|\geq\delta}|h|^{-2d}\,dh = C(p-d)\om_{d-1}
\int_\delta^\infty r^{-d-1}\,dr=\frac{C\om_{d-1}}{d}
(p-d)\delta^{-d}.  
\end{align*}
Hence, for every fixed $\delta>0$,
\begin{equation}\label{eq:far}
\lim_{p \to d^+}(p-d)\int_{|h|\geq\delta}
\frac{1}{|h|^{2d}}
\int_{\R^d}|u(x+h)-u(x)|^p\,dx\,dh=0.
\end{equation}

\smallskip

\noindent Let us now focus on the second integral. Since $u\in C_c^1(\mathbb R^d)$, its gradient is uniformly
continuous. Define
\begin{equation*}
   \omega(\delta) :=
\sup_{\substack{a,b\in\R^d \\ |a-b|\leq\delta}}
|\nabla u(a)-\nabla u(b)|. 
\end{equation*}
Then $\omega(\delta)\to0$ as $\delta\to0^+$. Fix $\sigma\in\mathbb S^{d-1}$ and $0<r<\delta<1$. By the fundamental theorem of calculus,
\begin{equation*}
    u(x+r\sigma)-u(x)
=r\int_0^1\nabla u(x+tr\sigma)\cdot\sigma\,dt.
\end{equation*}
Therefore,
\begin{equation*}
    u(x+r\sigma)-u(x)-r\nabla u(x)\cdot\sigma =r\int_0^1
\bigl(\nabla u(x+tr\sigma)-\nabla u(x)\bigr)
\cdot\sigma\,dt.
\end{equation*}
It follows that
\begin{equation*}
    |u(x+r\sigma)-u(x)-r\nabla u(x)\cdot\sigma|
\leq r\omega(\delta).
\end{equation*}
Both $u(x+r\sigma)-u(x)$ and
$r\nabla u(x)\cdot\sigma$ vanish outside $B_{R+1}$. Consequently,
\begin{equation*}
\left(
\int_{\R^d}
|u(x+r\sigma)-u(x)-r\nabla u(x)\cdot\sigma|^p
\,dx\right)^{1/p} \leq r\omega(\delta)|B_{R+1}|^{1/p} \leq Cr\omega(\delta), 
\end{equation*}
where $C=\max\{1,|B_{R+1}|\}$ is independent of
$p$, $r$, and $\sigma$. By Minkowski's inequality and its reverse form, we obtain
\begin{align*}
r\left(
\left(\int_{\R^d}|\nabla u(x)\cdot\sigma|^p\,dx
\right)^{1/p}
-C\omega(\delta)\right) & \leq
\left(\int_{\R^d}|u(x+r\sigma)-u(x)|^p\,dx
\right)^{1/p} \\ & \quad \leq
r\left(
\left(\int_{\R^d}|\nabla u(x)\cdot\sigma|^p\,dx
\right)^{1/p}
+C\omega(\delta)\right).  
\end{align*}
Since the middle expression is nonnegative, we may
replace the lower bound by its positive part. Thus
\begin{align}\label{ineq09}
    r^p\left(
\left(\int_{\R^d}|\nabla u(x)\cdot\sigma|^p\,dx
\right)^{1/p}
-C\omega(\delta)\right)_+^p \nonumber & \leq
\int_{\R^d}|u(x+r\sigma)-u(x)|^p\,dx \\ & \quad \leq
r^p\left(
\left(\int_{\R^d}|\nabla u(x)\cdot\sigma|^p\,dx
\right)^{1/p}
+C\omega(\delta)\right)^p,
\end{align}
where $a_+=\max\{a,0\}$.

Using polar coordinates, we have
\begin{align*}
    (p-d)\int_{|h|<\delta}
\frac{1}{|h|^{2d}}
\int_{\R^d} &|u(x+h)-u(x)|^p\,dx\,dh \\ & =(p-d)\int_{\mathbb S^{d-1}}\int_0^\delta
r^{-d-1}
\int_{\R^d}|u(x+r\sigma)-u(x)|^p\,dx
\,dr\,d\sigma.
\end{align*}
Also, we have
\begin{equation*}
 (p-d)\int_0^\delta r^{p-d-1}\,dr
=\delta^{p-d}.
\end{equation*}
Applying the inequalities obtained in \eqref{ineq09}, we get
\begin{align*}
 \delta^{p-d}
\int_{\mathbb S^{d-1}} &
\left(
\left(\int_{\R^d}|\nabla u(x)\cdot\sigma|^p\,dx
\right)^{1/p}
-C\omega(\delta)\right)_+^p\,d\sigma\\
&\quad\leq
(p-d)\int_{|h|<\delta}
\frac{1}{|h|^{2d}}
\int_{\R^d}|u(x+h)-u(x)|^p\,dx\,dh\\
&\quad\leq
\delta^{p-d}
\int_{\mathbb S^{d-1}}
\left(
\left(\int_{\R^d}|\nabla u(x)\cdot\sigma|^p\,dx
\right)^{1/p}
+C\omega(\delta)\right)^p\,d\sigma.  
\end{align*}
As $p \to d$, we have $\delta^{p-d}\to1$. Since $\nabla u$ is bounded and supported in $B_R$,
\begin{equation*}
    \int_{\R^d}|\nabla u(x)\cdot\sigma|^p\,dx
\longrightarrow
\int_{\R^d}|\nabla u(x)\cdot\sigma|^d\,dx
\end{equation*}
uniformly in $\sigma\in\mathbb S^{d-1}$. Taking the limit inferior and limit superior in the preceding inequalities, and using \eqref{eq:far}, we obtain
\begin{align*}
  \int_{\mathbb S^{d-1}} &
\left(
\left(\int_{\R^d}|\nabla u(x)\cdot\sigma|^d\,dx
\right)^{1/d}
-C\omega(\delta)\right)_+^d\,d\sigma \\ & \quad \leq
\liminf_{p \to d^+}(p-d)
\int_{\R^d}\int_{\R^d}
\frac{|u(x)-u(y)|^p}{|x-y|^{2d}}\,dy\,dx\\
&\quad\leq
\limsup_{p \to d^+}(p-d)
\int_{\R^d}\int_{\R^d}
\frac{|u(x)-u(y)|^p}{|x-y|^{2d}}\,dy\,dx\\
&\quad\leq
\int_{\mathbb S^{d-1}}
\left(
\left(\int_{\R^d}|\nabla u(x)\cdot\sigma|^d\,dx
\right)^{1/d}
+C\omega(\delta)\right)^d\,d\sigma.  
\end{align*}
Letting $\delta\to0^+$ and using
$\omega(\delta)\to0$, the dominated convergence theorem
gives
\begin{equation*}
  \lim_{p \to d^+}(p-d)
\int_{\R^d}\int_{\R^d}
\frac{|u(x)-u(y)|^{p}}{|x-y|^{2d}}\,dy\,dx =\int_{\mathbb S^{d-1}}
\int_{\R^d}|\nabla u(x)\cdot\sigma|^d\,dx\,d\sigma.
\end{equation*}
By rotation invariance of surface measure,
\begin{equation*}
    \int_{\mathbb S^{d-1}}|\nabla u(x)\cdot\sigma|^d\,d\sigma
=|\nabla u(x)|^d
\int_{\mathbb S^{d-1}}|\sigma_1|^d\,d\sigma.
\end{equation*}
Therefore, by Tonelli's theorem,
\begin{equation*}
\lim_{p \to d^+}(p-d)
\int_{\R^d}\int_{\R^d}
\frac{|u(x)-u(y)|^p}{|x-y|^{2d}}\,dy\,dx=
\left(\int_{\mathbb S^{d-1}}|\sigma_1|^d\,d\sigma\right)
\int_{\R^d}|\nabla u(x)|^d\,dx.
\end{equation*}
Also, we have
\begin{equation*}
    \int_{\mathbb S^{d-1}}|\sigma_1|^d\,d\sigma
=\frac{2\pi^{\frac{d-1}{2}}
\Gamma\left(\frac{d+1}{2}\right)}{\Gamma(d)}.
\end{equation*}
Substituting this identity into the limiting formula gives the assertion of the theorem.
\end{proof}

\bigskip

\textbf{Acknowledgement:} V. Sahu is supported by the Anusandhan National Research Foundation (ANRF) through the National Postdoctoral Fellowship (PDF/2025/004611).

\end{document}